  \newtheorem{theorem}{Theorem}  
  \newtheorem{proposition}[theorem]{Proposition}
  \newtheorem{lemma}[theorem]{Lemma}
  \newtheorem{definition}[theorem]{Definition}
  \newtheorem{corollary}[theorem]{Corollary}  
  \newcommand{\connectivity}{connected}
  \newcommand{\eop}{\hfill{$\Box$}}
  \newenvironment{proof}
  {\begin{trivlist}\item[]{{\sc Proof.}}}{\eop\noindent\end{trivlist}}
\begin{document}
  \title{Optimal control of the convergence time in the Hegselmann--Krause dynamics}
  \author{{\sc Sascha Kurz}\thanks{sascha.kurz@uni-bayreuth.de}\\ 
      Department of Mathematics, University of Bayreuth\\ 
      D-95440 Bayreuth, Germany}
  \maketitle
  \vspace*{-4mm}
  \noindent
  {
    \center\small{Keywords: opinion dynamics, Hegselmann--Krause model, convergence time, 
    optimal control\\MSC: 39A60, 37N35, 91D10, 93C55\\} 
  }
  \noindent
  \rule{\textwidth}{0.3 mm}

  \begin{abstract}
    \noindent
    We study the optimal control problem of minimizing the convergence time in 
    the discrete Hegselmann--Krause model of opinion dynamics. The underlying 
    model is extended with a set of strategic agents that can freely place their
    opinion at every time step. Indeed, if suitably coordinated, the strategic agents 
    can significantly lower the convergence time of an instance of the 
    Hegselmann--Krause model. We give several lower and upper worst-case bounds 
    for the convergence time of a Hegselmann--Krause system with a given number 
    of strategic agents, while still leaving some gaps for future research.      
  \end{abstract}
  \noindent
  \rule{\textwidth}{0.3 mm}

\section{Introduction}

The dynamics of opinion formation using agent-based models has been studied for more than half a century, see 
e.g.\ \cite{hegselmann2002opinion} for a (partial) overview of different models. Here we consider one specific model, 
which generated a lot of research papers in the social simulation community, the so-called bounded confidence model 
also known as the Hegselmann--Krause model. As originally defined in \cite{Krause:SozDyn:1997}, see also \cite{hegselmann2002opinion}, 
the Hegselmann--Krause model considers discrete time and a finite set of agents with opinions in $\mathbb{R}$, which 
can easily be generalized to $\mathbb{R}^d$ as an opinion space. At each time step the opinion of every agent is updated by averaging certain other opinions. However, an 
agent is influenced only by those opinions that are \textit{near} to his or her own opinion. We will give a precise specification 
of the model in Section~\ref{sec_hk}.

One key issue of a model for opinion dynamics is the question of convergence, see e.g.\ 
\cite{blondel2005convergence,hendrickx2006convergence,lorenz2005stabilization,moreau2005stability,kurz2011hegselmann}. Here by 
convergence we mean a final stable state, where the agents' opinions remain the same in all subsequent time steps. Some authors 
also say that the system is \textit{in equilibrium} or has \textit{frozen}. If convergence is guaranteed, the next question is 
about the necessary number of time steps to reach this state. For the 
Hegselmann--Krause model the convergence time has been upper bounded by $n^{O(n)}$ in \cite{chazelle2011total}. The author's 
conjecture of a polynomial convergence time was, to the best of our knowledge, first proven for the special case of dimension
$d=1$ in \cite{martinez2007synchronous}. Their upper bound $O(n^5)$ was subsequently improved to $O(n^4)$ in 
\cite{touri2011discrete} and to $O(n^3)$ in \cite{bhattacharyya2013convergence,mohajer2012convergence}. A polynomial upper 
bound for a general dimension $d$ was presented in \cite{bhattacharyya2013convergence} and recently improved 
in\cite{martinsson2015improved}. For dimension $d=1$ an $\Omega(n)$ 
worst-case lower bound was given in \cite{martinez2007synchronous}. This was improved recently to $\Omega(n^2)$ in \cite{wedin2014quadratic}. 
For dimension $d=2$ an example yielding a lower bound of $\Omega(n^2)$ was also given in \cite{bhattacharyya2013convergence}.   
Some first few exact values of the worst-case convergence time in dimension $d=1$ were determined in \cite{kurz2014long} using 
integer linear programming techniques.

Recently researchers started to look at the problem of controlling or steering an instance of an opinion dynamics system towards 
a desired state, see \cite{Wongkaew+Caponigro+Borzi:LeadershipFlocking:2014,albi2014kinetic,opinion_control,borzi2014modeling}. 
Exogenous interventions into the systems dynamics were e.g.\ studied in \cite{Mirtabatabaei+Jia+Bullo:OD-BC-Convergence:2014,
Fortunato+Stauffer:OD-Simulations:2006,Fortunato:OD-DamageSpreading:2005}. The \textit{desired state} in \cite{opinion_control} 
is one in which as many as possible of the opinions lie in some specified subset of the opinion space, called a 
\textit{conviction interval}. The practical problem that one can have in mind is that 
of a political or commercial campaign. Via media channels, speeches or personal communications the opinion dynamics 
can be influenced. Formally such a controllable exogenous influence was modeled by introducing \textit{strategic} agents in 
\cite{opinion_control}. Here we will use strategic agents to accelerate the convergence time in the Hegselmann--Krause model, i.e., 
we study the optimal control problem of minimizing the convergence time.

The remaining part of the paper is organized as follows. In Section~\ref{sec_hk} we formally introduce the Hegselmann--Krause 
dynamics with strategic agents and collect some theoretical observations about the system dynamics. The optimal control problem 
is studied in Section~\ref{sec_main}, i.e., we give several lower and upper worst-case bounds for the convergence time of a
Hegselmann--Krause system with a given number of strategic agents. We draw a conclusion and propose some future lines of research 
in Section~\ref{sec_conclusion}.  

\section{The Hegselmann--Krause dynamics with strategic agents}
\label{sec_hk}

As originally defined, the Hegselmann--Krause model considers a one-dimensional continuous opinion space $\mathbb{R}$ \footnote{In 
some papers the unit interval $[0,1]$ or other intervals of the real line are used. For a finite number of agents those variants 
are equivalent via scaling.}, a set $N=\{1,\dots,n\}$ of agents -- these are the non-strategic agents -- and discrete time. At 
each time $t\in\mathbb{N}_{\ge 0}$ each agent $i\in N$ has a certain opinion $x_i(t)\in\mathbb{R}$. The opinion of an agent $i\in N$ 
is influenced at time $t$ only by those agents which have a similar opinion, more precisely, where the distance between the respective 
opinions is at most $\varepsilon$. In our rescaled version of an opinion space $\mathbb{R}$ we can assume w.l.o.g.\ $\varepsilon=1$. 
The new opinion $x_i(t+1)$ of agent $i$ is then determined as the mean of all opinions\footnote{Note that every agent 
influences itself.} that influence agent $i$ at time $t$.

Here we want to extend the model by strategic agents. To this end we denote the set of strategic agents by $S$ and the set of
non-strategic agents by $N$. The union of these sets is denoted by $A=N\cup S$. The opinion $x_s(t)$ of a strategic agent $s\in S$ can be freely chosen at each time $t\in \mathbb{N}_{\ge 0}$ as 
any real number. However, in determining the new opinion at time $t+1$ the non-strategic agents make no difference 
between strategic and non-strategic agents, but are equally influenced by all agents in $A$. To be more precise, we set
\begin{equation}
  \label{eq_hk_dynamics}
  x_i(t+1)=\frac{\sum\limits_{j\in A:\Vert x_i(t)-x_j(t)\Vert\le 1}x_j(t)}{\left|\left\{j\in A:\Vert x_i(t)-x_j(t)\Vert\le 1 \right\}\right|}
\end{equation}
for all $i\in N$. Here, $\Vert\cdot\Vert$ denotes the Euclidean norm (or the absolute value, as it makes no difference 
in dimension $1$). As a model extension we may also use Equation~(\ref{eq_hk_dynamics}) for strategic agents at time steps where 
they do not freely reset their opinion. However, in this paper we assume that each strategic agent freely chooses an opinion at every 
time step.  

We say that two agents $i,j\in A$ are \textit{neighbors} at time $t$ if $\Vert x_i(t)-x_j(t)\Vert\le 1$, which induces an 
\textit{influence graph} $\mathcal{G}_t$. For brevity, we denote the \textit{neighborhood} of an agent $i\in A$ at time $t$ 
by $\mathcal{N}_i(t)=\left\{j\in A\,:\, \Vert x_i(t)-x_j(t)\Vert\le 1\right\}$, i.e., the set of neighbors. With this,  
Equation~(\ref{eq_hk_dynamics}) can be rewritten as $x_i(t+1)=\sum_{j\in\mathcal{N}_i(t)} x_j(t)/\left|\mathcal{N}_i(t)\right|$. 

The Hegselmann--Krause dynamics, with or without strategic agents, is rather complicated and hard to treat analytically, i.e., 
besides convergence not too many theoretical results are known. For the 
special case of dimension $1$ at the very least the ordering of the opinions of the agents is preserved, as observed in several 
papers, see e.g.\ \cite[Lemma 2]{krause2000discrete}:
\begin{proposition}
  \label{prop_ordering}
  If $x_i(0)\le x_j(0)$ for $i,j\in N$, then $x_i(t)\le x_j(t)$ for all all $t\in\mathbb{N}_{\ge 0}$.
\end{proposition}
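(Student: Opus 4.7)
The plan is to argue by induction on $t$, where the base case $t=0$ is exactly the hypothesis. For the inductive step, I would assume $x_i(t) \le x_j(t)$ and aim to deduce $x_i(t+1) \le x_j(t+1)$ from the update rule~(\ref{eq_hk_dynamics}).

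The main idea I would use is to partition the set of agents that enter either update, namely $\mathcal{N}_i(t) \cup \mathcal{N}_j(t)$, into three parts
\[
  L = \mathcal{N}_i(t) \setminus \mathcal{N}_j(t), \qquad C = \mathcal{N}_i(t) \cap \mathcal{N}_j(t), \qquad R = \mathcal{N}_j(t) \setminus \mathcal{N}_i(t),
\]
and to show that the inductive hypothesis forces these three classes to sit in that order on the real line. Unwinding the neighborhood conditions and using $x_i(t) \le x_j(t)$ to remove the absolute values, I expect to obtain $x_k(t) < x_j(t) - 1$ for every $k \in L$, $x_k(t) > x_i(t) + 1$ for every $k \in R$, and $x_j(t)-1 \le x_k(t) \le x_i(t)+1$ for every $k \in C$. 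This yields $x_k(t) \le x_\ell(t)$ whenever $k \in L$, $\ell \in C \cup R$, or whenever $k \in C$, $\ell \in R$.

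With this structure in hand, the update rule expresses $x_i(t+1)$ as a convex combination of the averages $\bar{x}_L$ and $\bar{x}_C$ with weights $|L|/|\mathcal{N}_i(t)|$ and $|C|/|\mathcal{N}_i(t)|$, and $x_j(t+1)$ as the analogous combination of $\bar{x}_C$ and $\bar{x}_R$. If $C \neq \emptyset$, the ordering above gives $x_i(t+1) \le \bar{x}_C \le x_j(t+1)$. If $C = \emptyset$, then $i \in L$ and $j \in R$ (both parts are nonempty, since every agent lies in its own neighborhood), and the same spatial ordering yields $x_i(t+1) = \bar{x}_L \le \bar{x}_R = x_j(t+1)$.

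I do not anticipate a deep obstacle. The most delicate step will be the structural claim about the three parts, where the absolute values must be eliminated using the sign information supplied by $x_i(t) \le x_j(t)$ to rule out the geometrically impossible configurations (e.g.\ a neighbor of $i$ sitting to the right of $x_j$ but outside $\mathcal{N}_j(t)$). The presence of strategic agents is harmless: equation~(\ref{eq_hk_dynamics}) treats all members of $A$ uniformly inside the sum, so only the real-valued opinions, not the agents' types, enter the ordering argument.
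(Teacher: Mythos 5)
Your argument is correct. The paper itself gives no proof of Proposition~\ref{prop_ordering} --- it is stated as a well-known fact with a citation to \cite[Lemma 2]{krause2000discrete} --- so there is nothing to compare against; your induction with the three-block decomposition $L$, $C$, $R$ is the standard argument, and the delicate step you flag does go through: for $k\in L$ the bound $x_k(t)\le x_i(t)+1\le x_j(t)+1$ rules out the branch $x_k(t)>x_j(t)+1$ of $\Vert x_j(t)-x_k(t)\Vert>1$, and symmetrically for $R$, which forces $\max L<\min(C\cup R)$ and $\max(L\cup C)<\min R$ and hence the two convex-combination inequalities. Your remark about strategic agents is also the right one: they enter $\mathcal{N}_i(t)$ and $\mathcal{N}_j(t)$ only through their positions, so the same partition applies verbatim, which is exactly why the paper can assert the proposition continues to hold when $S\neq\emptyset$ (for non-strategic $i,j$).
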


This well known observation is clearly also true, if strategic agents are present. If $i$ or $j$ is a strategic agent, then 
we can not have such a result, since, by definition, strategic agents can choose their opinion freely. W.l.o.g.\ we assume 
$x_i(0)\le x_j(0)$ for all $i\le j$, $i,j\in N$ in the remaining part of the paper. 

For dimension $1$ and the absence of strategic agents there is another well known theoretical insight. If two agents $i$ and 
$j$ are, at a certain time step $t$, in two different {\connectivity} components of $\mathcal{G}_t$, then this property will be 
preserved for all times $t'>t$. As a consequence, if $\Vert x_{i+1}(t)-x_i(t)\Vert>1$, then $\Vert x_{i+1}(t')-x_i(t')\Vert>1$ 
for all $t'>t$ (using the assumed ordering of the starting opinions). However, this need not be true if at least one strategic 
agent is present. It can indeed be easily shown, see e.g.\ \cite{opinion_control}, that one strategic agent suffices to bring 
any configuration of starting positions to a consensus in a finite number of time steps, i.e., the opinions of all non-strategic 
agents coincide after some rounds. 

For the other direction we remark that it is always possible, given a suitably large number of strategic agents and time 
steps, to move different opinions of any two non-strategic agents as far apart from each other as desired. If albeit $x_i(t)=x_j(t)$, 
where $i,j\in N$, at a certain time $t$, then we have $x_i(t')=x_j(t')$ for all $t'>t$, independently of the precise opinions 
of the strategic agents.  

\begin{definition}
  For each $x\in\mathbb{R}$ and each $t\in\mathbb{N}_{\ge 0}$ we define the \textit{weight} of $x$ at $t$ as
  $w_t(x)=\left|\left\{i\in N\,:\, x_i(t)=x\right\}\right|$. With this, the \textit{weight} of agent $i\in N$ at $t$ 
  is defined as $w_t\!\left(x_i(t)\right)$. 
\end{definition}

\begin{proposition}
  We have $0\le w_t(x)\le n$ and $w_t(x)\in\mathbb{N}_{\ge 0}$ for all $x\in\mathbb{R}$, $t\in\mathbb{N}_{\ge 0}$. 
  For all $t\in\mathbb{N}_{\ge 0}$ 
  and all $i\in N$ we have $w_t\!\left(x_i(t)\right)\le w_{t+1}\!\left(x_i(t)\right)$, i.e., the weight of a non-strategic agent 
  is weakly increasing. 
\end{proposition}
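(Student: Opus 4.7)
The first assertion is immediate from the definition: $w_t(x)$ is the cardinality of a subset of the $n$-element set $N$, so it is automatically a non-negative integer bounded above by $n$.

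For the substantive second part, the plan is to reduce the monotonicity statement to a single one-line lemma: \emph{if $i,j\in N$ satisfy $x_i(t)=x_j(t)$, then $x_i(t+1)=x_j(t+1)$}. I would prove this purely by inspection of the update rule~(\ref{eq_hk_dynamics}). The key observation is that the neighborhood $\mathcal{N}_k(t)=\{\ell\in A:\Vert x_k(t)-x_\ell(t)\Vert\le 1\}$ depends on the index $k$ only through the single real number $x_k(t)$, since every comparison on the right-hand side is taken against that one value. Hence $x_i(t)=x_j(t)$ forces $\mathcal{N}_i(t)=\mathcal{N}_j(t)$, and then both the numerator and the denominator of (\ref{eq_hk_dynamics}) coincide for $i$ and $j$, giving $x_i(t+1)=x_j(t+1)$.

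Granted this lemma, the set $\{k\in N:x_k(t)=x_i(t)\}$ is contained in $\{k\in N:x_k(t+1)=x_i(t+1)\}$, because each element of the first set is sent into the second by applying the lemma with that $k$ in the role of $j$. Taking cardinalities yields $w_t(x_i(t))\le w_{t+1}(x_i(t+1))$, i.e.\ the weight attached to agent $i$ weakly increases from one time step to the next, which is the content of the proposition (with $x_i(t+1)$ on the right-hand side being the opinion that agent $i$ actually carries at time $t+1$).

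I do not anticipate any real obstacle. The whole proof rests on the symmetry of the update rule in the identity of the updating non-strategic agent, and the presence of the strategic agents in $S$ plays no role beyond the fact that they contribute to every $\mathcal{N}_k(t)$ in the same way, so that the equality $\mathcal{N}_i(t)=\mathcal{N}_j(t)$ derived above is unaffected by which elements of $A\setminus N$ lie in those neighborhoods.
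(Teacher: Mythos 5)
Your proof is correct and complete. The paper states this proposition without any proof at all, so there is no argument of the author's to compare against; yours is the natural one. The key lemma --- that $x_i(t)=x_j(t)$ for $i,j\in N$ forces $\mathcal{N}_i(t)=\mathcal{N}_j(t)$, because each neighborhood depends on its index only through the opinion value, and hence $x_i(t+1)=x_j(t+1)$ --- is exactly the ``if'' direction of the paper's Proposition~\ref{prop_coincidence}, and the resulting inclusion $\left\{k\in N: x_k(t)=x_i(t)\right\}\subseteq\left\{k\in N: x_k(t+1)=x_i(t+1)\right\}$ gives the monotonicity of the cardinalities immediately. Your remark that the strategic agents enter every neighborhood only through the comparison against the single value $x_k(t)$, and therefore cannot break this symmetry, is the one place where something could have gone wrong, and you handled it correctly.

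One point deserves emphasis: you prove $w_t\!\left(x_i(t)\right)\le w_{t+1}\!\left(x_i(t+1)\right)$, whereas the proposition as printed has $w_{t+1}\!\left(x_i(t)\right)$ on the right-hand side. You were right to read it your way. The literal version is false: two non-strategic agents at distance exactly $1$ with no strategic agents both move to their common midpoint at time $1$, so the weight of either original position drops from $1$ to $0$. The definition of the weight of agent $i$ at time $t$ as $w_t\!\left(x_i(t)\right)$, together with the verbal gloss ``the weight of a non-strategic agent is weakly increasing,'' makes clear that $w_{t+1}\!\left(x_i(t+1)\right)$ is intended, so the printed formula is a typo that your parenthetical remark correctly repairs.
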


If we use the strategic agents to control the system dynamics, we can ensure that some properties will be preserved if we choose 
the opinions of the strategic agents accordingly. To this end we denote by 
$\mathcal{N}'_i(t)=\left\{j\in N\,:\, \Vert x_i(t)-x_j(t)\Vert\le 1\right\}$ the set of non-strategic neighbors of a non-strategic 
agent~$i$. The graph arising from $\mathcal{G}_t$ by restricting the vertex set to non-strategic agents is denoted by $\mathcal{G}_t'$. 
With this we can state:
\begin{definition}
  We call a non-strategic agent $i\in N$ \textit{frozen} if $\left|\mathcal{N}_i'(t)\right|=w_t\!\left(x_i(t)\right)$. For the 
  vertex set $\mathcal{C}$ of a {\connectivity} component of $\mathcal{G}_t'$ we denote by $l(\mathcal{C},t)$ the agent with the 
  smallest index in $\mathcal{C}$ and by $r(\mathcal{C},t)$ the agent with the largest index in $\mathcal{C}$. The \textit{width} 
  $w(\mathcal{C},t)$ of $\mathcal{C}$ at $t$ is given by $x_{r(\mathcal{C},t)}(t)-x_{l(\mathcal{C},t)}(t)\in\mathbb{R}_{\ge 0}$.
\end{definition}

Due to our assumed ordering of the starting opinions, we have $x_{l(\mathcal{C},t)}(t)\le x_i(t)\le x_{r(\mathcal{C},t)}(t)$ for 
all $i\in\mathcal{C}$, i.e., $l(\mathcal{C},t)$ and $r(\mathcal{C},t)$ mark the \textit{ends} of the interval of {\connectivity} 
component $\mathcal{C}$ and $w(\mathcal{C},t)$ denotes the corresponding length. We remark that $w(\mathcal{C},t)=0$ if and only 
if all non-strategic agents of $\mathcal{C}$ are frozen at time $t$. 

\begin{proposition}
  \label{prop_coincidence}
  Given two non-strategic agents $i,j\in N$ and a time $t\in\mathbb{N}_{\ge 0}$ we have $x_i(t+1)=x_j(t+1)$ if and only if 
  $\mathcal{N}_i(t)=\mathcal{N}_j(t)$.
\end{proposition}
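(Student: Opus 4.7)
The ``if'' direction is immediate from \eqref{eq_hk_dynamics}: if $\mathcal{N}_i(t)=\mathcal{N}_j(t)$, the sums and the cardinalities in the update rule agree term by term, so $x_i(t+1)=x_j(t+1)$.

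For the converse I would argue by contrapositive and show that $\mathcal{N}_i(t)\neq\mathcal{N}_j(t)$ forces $x_i(t+1)\neq x_j(t+1)$. By Proposition~\ref{prop_ordering} and the standing ordering convention I may assume $x_i(t)\le x_j(t)$. The case $x_i(t)=x_j(t)$ makes the two unit balls coincide, hence $\mathcal{N}_i(t)=\mathcal{N}_j(t)$, so I further assume $x_i(t)<x_j(t)$ and aim for the sharper conclusion $x_i(t+1)<x_j(t+1)$.

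The plan is to partition the two neighborhoods into
$L=\mathcal{N}_i(t)\setminus\mathcal{N}_j(t)$, $C=\mathcal{N}_i(t)\cap\mathcal{N}_j(t)$, $R=\mathcal{N}_j(t)\setminus\mathcal{N}_i(t)$, and write $\bar L,\bar C,\bar R$ for the averages of their opinions whenever the underlying set is non-empty. The geometric heart of the argument is that an opinion in $L$ lies in $[x_i(t)-1,x_i(t)+1]$ but not in $[x_j(t)-1,x_j(t)+1]$; since $x_i(t)+1<x_j(t)+1$, such an opinion is forced strictly below $x_j(t)-1$. Symmetrically every opinion in $R$ exceeds $x_i(t)+1$ strictly, while opinions in $C$ are confined to $[x_j(t)-1,x_i(t)+1]$ (which automatically makes $C=\emptyset$ when $x_j(t)-x_i(t)>2$). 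Averaging yields $\bar L<x_j(t)-1\le\bar C\le x_i(t)+1<\bar R$ for all quantities that are defined.

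The proof is then closed by a short case distinction on which of $L$, $C$, $R$ are empty, starting from the convex-combination expressions
\[
  x_i(t+1)=\frac{|L|\bar L+|C|\bar C}{|L|+|C|},\qquad
  x_j(t+1)=\frac{|R|\bar R+|C|\bar C}{|R|+|C|}.
\]
When $C\neq\emptyset$ one obtains $x_i(t+1)\le\bar C\le x_j(t+1)$, with the first inequality strict whenever $L\neq\emptyset$ and the second whenever $R\neq\emptyset$; since $\mathcal{N}_i(t)\neq\mathcal{N}_j(t)$ at least one of $L,R$ is non-empty, producing the desired strict inequality. When $C=\emptyset$ one has $x_i(t+1)=\bar L$ and $x_j(t+1)=\bar R$, and combining $\bar L\le x_i(t)+1$, $\bar R\ge x_j(t)-1$, $\bar L<x_j(t)-1$, $\bar R>x_i(t)+1$ gives $\bar L<\bar R$ in both ranges $x_j(t)-x_i(t)\le 2$ and $x_j(t)-x_i(t)>2$. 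The only mild obstacle I anticipate is the book-keeping needed to make sure that strictness survives in every sub-case, in particular in the boundary configuration where $L$ or $R$ consists of a single agent with opinion arbitrarily close to the respective threshold; this is precisely what the strict separations $\bar L<x_j(t)-1$ and $\bar R>x_i(t)+1$ are designed to handle.
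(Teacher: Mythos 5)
Your argument is correct and complete. Note that the paper states Proposition~\ref{prop_coincidence} without any proof, treating it as an elementary observation, so there is no authorial argument to compare against; your write-up supplies exactly the missing justification. The key geometric fact you isolate --- that in dimension $1$ with $x_i(t)<x_j(t)$ every agent of $L=\mathcal{N}_i(t)\setminus\mathcal{N}_j(t)$ lies strictly below $x_j(t)-1$, every agent of $R=\mathcal{N}_j(t)\setminus\mathcal{N}_i(t)$ strictly above $x_i(t)+1$, and the common part $C$ in between --- is the right one, and the convex-combination bookkeeping correctly yields the strict inequality $x_i(t+1)<x_j(t+1)$ once at least one of $L,R$ is non-empty. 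Two minor streamlinings: the reduction to $x_i(t)\le x_j(t)$ is pure symmetry of the claim in $i$ and $j$ and needs no appeal to Proposition~\ref{prop_ordering}; and in the case $C=\emptyset$ the single chain $\bar L<x_j(t)-1\le\bar R$ already settles both of your sub-ranges for $x_j(t)-x_i(t)$, so that case split can be dropped.
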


If the influence graph $\mathcal{G}_t'$ at time $t$ decomposes into 
the {\connectivity} components $\mathcal{C}_1,\dots,\mathcal{C}_h$, then we denote the \textit{width} of the entire configuration, i.e., 
the sum over the widths of the {\connectivity} components, by $w\!\left(\mathcal{G}_t',t\right)=\sum_{i=1}^h w(\mathcal{C}_i,t)$.

The width of a {\connectivity} component can only increase if a strategic agent is placed behind the ends of the corresponding interval 
but still within the influence range.

\begin{proposition}
  \label{prop_lr_shrinking}
  Let $\mathcal{C}$ be a {\connectivity} component of the influence graph $\mathcal{G}_t'$ at a certain time $t\in\mathbb{N}_{\ge 0}$. 
  If we have $x_a(t)\ge x_{l(\mathcal{C},t)}(t)$ and $x_a(t)\le x_{r(\mathcal{C},t)}(t)$ for all $i\in \mathcal{C}$ and all 
  $a\in\mathcal{N}_i(t)$, then we have $x_{l(\mathcal{C},t)}(t)\le x_j(t+1)\le x_{r(\mathcal{C},t)}(t)$ for all 
  $j\in\mathcal{C}$.
\end{proposition}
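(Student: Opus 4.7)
The plan is to reduce the claim to the elementary observation that any weighted average of real numbers lies between their minimum and maximum. Fix any $j\in\mathcal{C}$ and set $x_l:=x_{l(\mathcal{C},t)}(t)$ and $x_r:=x_{r(\mathcal{C},t)}(t)$. By the update rule~(\ref{eq_hk_dynamics}), the new opinion $x_j(t+1)$ is the arithmetic mean of the opinions $x_a(t)$ where $a$ ranges over $\mathcal{N}_j(t)\subseteq A$ (thus possibly including strategic agents).

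Since $j$ belongs to the connected component $\mathcal{C}$ of $\mathcal{G}_t'$, the hypothesis applies to $i=j$ and yields $x_l\le x_a(t)\le x_r$ for every $a\in\mathcal{N}_j(t)$. Summing these inequalities over $a\in\mathcal{N}_j(t)$ and dividing by $|\mathcal{N}_j(t)|$ gives
\[
 x_l \;=\;\frac{\sum_{a\in\mathcal{N}_j(t)} x_l}{|\mathcal{N}_j(t)|}\;\le\; \frac{\sum_{a\in\mathcal{N}_j(t)} x_a(t)}{|\mathcal{N}_j(t)|}\;=\;x_j(t+1)\;\le\;\frac{\sum_{a\in\mathcal{N}_j(t)} x_r}{|\mathcal{N}_j(t)|}\;=\;x_r,
\]
which is exactly the desired conclusion. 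Because $j\in\mathcal{C}$ was arbitrary, this finishes the proof.

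There is essentially no obstacle here: the statement is a direct consequence of the fact that an average cannot exceed the largest nor fall below the smallest of the quantities averaged, and the hypothesis is tailored so that this bound applies uniformly to all neighbors, strategic or not. The only thing one has to be slightly careful about is that the neighbor set $\mathcal{N}_j(t)$ is indexed over all of $A$ (not just $N$), so that strategic agents do contribute to the average; but this is precisely why the hypothesis is formulated for all $a\in\mathcal{N}_i(t)$ rather than only for non-strategic neighbors.
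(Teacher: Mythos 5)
Your proof is correct and is exactly the standard argument: the paper states this proposition without proof, treating it as an immediate consequence of the fact that $x_j(t+1)$ is an average of the opinions $x_a(t)$, $a\in\mathcal{N}_j(t)$, each of which the hypothesis confines to $\left[x_{l(\mathcal{C},t)}(t),x_{r(\mathcal{C},t)}(t)\right]$. Your remark that the hypothesis is deliberately phrased over all of $\mathcal{N}_i(t)\subseteq A$ so as to cover strategic neighbors is precisely the point of the statement.
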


So, one can place the opinions of the strategic agents such that the width of the entire configuration does not increase.

Since the range of influence is at most $1$, each non-strategic agent can not move too far within one time step.
\begin{proposition}
  \label{prop_bounded_movement}
  For each $i\in N$ and each $t\in\mathbb{N}_{\ge 0}$ we have $\left\Vert x_i(t)-x_i(t+1) \right\Vert\le 1$.
\end{proposition}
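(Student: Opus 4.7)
The plan is to exploit the fact that $x_i(t+1)$ is defined as a convex combination (in fact an unweighted mean) of points that all lie within distance $1$ of $x_i(t)$, so their average cannot escape the unit ball centred at $x_i(t)$ either.

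Concretely, I would start from the defining equation~(\ref{eq_hk_dynamics}) and rewrite the displacement as
\[
x_i(t+1)-x_i(t)=\frac{1}{\left|\mathcal{N}_i(t)\right|}\sum_{j\in\mathcal{N}_i(t)}\bigl(x_j(t)-x_i(t)\bigr),
\]
which is legitimate because $i\in\mathcal{N}_i(t)$ (every agent is its own neighbour), so $\left|\mathcal{N}_i(t)\right|\ge 1$ and we may divide. Here I use $\sum_{j\in\mathcal{N}_i(t)}x_i(t)=|\mathcal{N}_i(t)|\cdot x_i(t)$ to subtract $x_i(t)$ inside the sum.

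Next I would apply the triangle inequality for the Euclidean norm to obtain
\[
\left\Vert x_i(t+1)-x_i(t)\right\Vert\le\frac{1}{\left|\mathcal{N}_i(t)\right|}\sum_{j\in\mathcal{N}_i(t)}\left\Vert x_j(t)-x_i(t)\right\Vert,
\]
and then invoke the defining property of the neighbourhood, namely $\Vert x_j(t)-x_i(t)\Vert\le 1$ for every $j\in\mathcal{N}_i(t)$. Each of the $|\mathcal{N}_i(t)|$ terms in the sum is therefore at most $1$, and after dividing by $|\mathcal{N}_i(t)|$ the claim follows.

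There is essentially no obstacle here; the statement is a direct convexity observation. The only point worth being careful about is that the argument must cover all neighbours, including strategic ones, because the averaging in~(\ref{eq_hk_dynamics}) runs over all of $A$. This causes no trouble, however, since the bound $\Vert x_j(t)-x_i(t)\Vert\le 1$ used in the triangle inequality step is exactly the condition by which $j$ belongs to $\mathcal{N}_i(t)$ in the first place, regardless of whether $j\in N$ or $j\in S$.
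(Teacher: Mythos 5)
Your proof is correct and is exactly the convexity argument the paper has in mind: the proposition is stated without proof, preceded only by the remark that ``the range of influence is at most $1$,'' and your rewriting of the displacement as an average of vectors $x_j(t)-x_i(t)$ of norm at most $1$ followed by the triangle inequality is the standard way to make that remark precise. Nothing is missing.
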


We say that the system has \textit{converged} at time $t$ if all non-strategic agents are frozen, i.e., if we have for every 
$i,j\in N$ either $x_i(t)=x_j(t)$ or $\Vert x_i(t)-x_j(t)\Vert>1$. Without the influence of any strategic agents, we may simply 
assume that they all place their opinions far apart or at one of the positions $x_i(t)$, the opinions of the non-strategic 
agents will remain unchanged, i.e., stable for all times $t'>t$. However, the strategic agents may convert frozen agents to 
non-frozen ones. This can make sense, if one wants to end up with a consensus. Nevertheless, we define the \textit{convergence time} 
of a Hegselmann--Krause system (HK system for short) as the smallest time $t$ such that the system has converged at time $t$. Obviously this definition of convergence 
time depends on the starting opinions $x_i(0)$ of the non-strategic agents and all opinions $x_j(t)$, or rules to compute them, of 
the strategic agents. As already observed in the introduction, the convergence time of a HK system with $n$ non-strategic agents and 
no strategic agent is upper bounded by $O(n^3)$, while the slowest known sequence of examples reaches $\Omega(n^2)$. In the following 
section we will consider the optimization problem of lowering the convergence time using strategic agents, i.e., we consider 
an optimal control problem.  

\section{Lowering the convergence time using strategic agents}
\label{sec_main}

Given a HK system we ask how to optimally coordinate the opinions of the $m=|S|$ strategic agents so that the convergence time
is minimized, i.e., we consider an optimal control problem. We call the corresponding time the \textit{optimal convergence time}, 
given the starting positions of the non-strategic agents and the number $m$ of strategic agents. Mostly we will be interested 
in general assertions rather than considerations of specific examples. So, by $f(n,m)$ we denote the supremum of the optimal 
convergence time over all HK instances with $n$ non-strategic agents and $m$ strategic agents. In this notation our current knowledge 
on the convergence time can be written as $f(n,0)\in O(n^3)$ and $f(n,0)\in\Omega(n^2)$. Obviously, we have 
$f(n,m)\le f(n',m)$ and $f(n,m)\ge f(n,m')$ for all $n'\ge n$ and all $m'\ge m$. 

We start by considering the case of $m=1$ strategic agent and mimic the proof strategy for the best known upper bound of the 
convergence time from \cite{bhattacharyya2013convergence}.  

\begin{lemma}
  \label{lemma_improvement}
  Given a HK system with $m=1$ strategic agent, the opinion of this agent at time $t$ can be chosen in such a way such that
  either 
  \begin{itemize}
    \item[(1)] the weight of a non-strategic agent increases and the width $w(\mathcal{G}_t',t)$ does not increase or
    \item[(2)] the width $w(\mathcal{G}_t',t)$ decreases by at least $\frac{1}{n+1}$
  \end{itemize} 
  at time $t+1$, whenever the HK system has not converged at time $t$.  
\end{lemma}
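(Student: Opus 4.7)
My plan is to identify a non-frozen {\connectivity} component $\mathcal{C}$ of $\mathcal{G}_t'$ (which exists by assumption) and to place the strategic agent either far from all non-strategic agents or on the ``inside boundary'' of $\mathcal{C}$ at position $x_l(\mathcal{C},t)+1$, depending on whether the width of some non-frozen component is below or at least $1$. A recurring tool is the elementary observation that two distinct components of $\mathcal{G}_t'$ are pairwise at distance strictly greater than $1$, so a careful choice of $x_s(t)$ only perturbs the component to which $s$ is attached.

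First I would handle the case in which some non-frozen component $\mathcal{C}$ has width $w(\mathcal{C},t)<1$. Then every pair of agents in $\mathcal{C}$ is within influence range, so $\mathcal{C}$ forms a clique in $\mathcal{G}_t'$ and $\mathcal{N}_i'(t)=\mathcal{C}$ for every $i\in\mathcal{C}$. Placing $s$ at distance greater than $1$ from all non-strategic agents gives $\mathcal{N}_i(t)=\mathcal{C}$ uniformly on $\mathcal{C}$, and Proposition~\ref{prop_coincidence} forces all agents of $\mathcal{C}$ to share a common opinion at $t+1$. Since $w(\mathcal{C},t)>0$ means $\mathcal{C}$ realises at least two distinct positions, the weight of the merged opinion is $|\mathcal{C}|$ and strictly exceeds each earlier weight in $\mathcal{C}$. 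Meanwhile, since $s$ lies outside everyone's range, the remaining components evolve under the standard Hegselmann--Krause rule, in which $l(\mathcal{C}',t)$ moves weakly right and $r(\mathcal{C}',t)$ moves weakly left, so widths do not increase. This settles~(1).

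In the complementary case every non-frozen component has width at least $1$; I would pick any such $\mathcal{C}$ and place $s$ at $x_l(\mathcal{C},t)+1\in[x_l(\mathcal{C},t),x_r(\mathcal{C},t)]$. Since any other component is separated from $\mathcal{C}$ by a gap greater than $1$, the point $x_l(\mathcal{C},t)+1$ sits at distance greater than $1$ from every non-strategic agent outside $\mathcal{C}$, so $s$ only perturbs $\mathcal{C}$ and the other components' widths remain non-increasing by the previous argument. Inside $\mathcal{C}$ I have $s\in\mathcal{N}_l$ at distance exactly $1$; because $l$ is leftmost, every $j\in\mathcal{N}_l'(t)$ satisfies $x_j(t)\ge x_l(t)$, and a direct averaging computation gives
\[
x_l(t+1)-x_l(t)=\frac{1+\sum_{j\in\mathcal{N}_l'(t)}(x_j(t)-x_l(t))}{|\mathcal{N}_l'(t)|+1}\ge\frac{1}{|\mathcal{N}_l'(t)|+1}\ge\frac{1}{n+1}.
\]
Moreover, $x_s=x_l+1\le x_r$, so either $s\notin\mathcal{N}_r$ and $r$ moves weakly left by the standard argument, or $s\in\mathcal{N}_r$ adds a value $\le x_r(t)$ to its average; in either case $x_r(t+1)\le x_r(t)$. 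Proposition~\ref{prop_lr_shrinking} then confines all of $\mathcal{C}$'s agents at $t+1$ to $[x_l(t+1),x_r(t+1)]$, so the total width contributed by the (possibly split) descendants of $\mathcal{C}$ is at most $w(\mathcal{C},t)-\tfrac{1}{n+1}$; combined with the unchanged widths of the other components this yields~(2).

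The main obstacle will be the bookkeeping in case~(2): one has to simultaneously secure the leverage on $x_l$ and the non-drift of $x_r$, while at the same time preventing any interaction of $s$ with the rest of the configuration. All three requirements are forced by the single choice $x_s=x_l+1$ together with the component-gap property and the assumption $w(\mathcal{C},t)\ge 1$.
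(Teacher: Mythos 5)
Your proposal is correct and follows essentially the same two-case strategy as the paper's proof: collapse a narrow component to a common opinion by parking the strategic agent far away (increasing weight), or place the agent at $x_{l(\mathcal{C},t)}(t)+1$ to pull the left end rightward by at least $\frac{1}{n+1}$ while Proposition~\ref{prop_lr_shrinking} keeps the component inside its old interval. The only cosmetic difference is that you draw the case boundary at width $<1$ versus the paper's $\le 1$, and you verify the non-interference with other components slightly more explicitly; both are immaterial.
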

\begin{proof}
  Let us denote the {\connectivity} components of $\mathcal{G}_t'$ by $\mathcal{C}_1,\dots,\mathcal{C}_h$. If there exists an index 
  $1\le g\le h$ with $0<w\!\left(\mathcal{C}_g,t\right)\le 1$, then we can place the opinion of the strategic agent far away from the
  other opinions so that no agent is influenced. Since $w\!\left(\mathcal{C}_g,t\right)\le 1$ we have 
  $\Vert x_i(t)-x_j(t)\Vert\le 1$, so that $x_i(t+1)=x_j(t+1)$ for all $i,j\in\mathcal{C}_g$. As $w\!\left(\mathcal{C}_g,t\right)>0$ 
  the weights of the agents in $\mathcal{C}_g$ increase by at least $1$ each in this case. Obviously, the width of the influence graph
  does not increase. 
  
  Otherwise we choose an index $1\le g\le h$ with $w\!\left(\mathcal{C}_g,t\right)>1$ and 
  set $x_s(t)=x_{l(\mathcal{C}_g,t)}(t)+1$, where $s$ denotes the strategic agent. With this we have
  \begin{eqnarray*}
    x_{l(\mathcal{C}_g,t)}(t+1)=\frac{1}{\left|\mathcal{C}_g\right|+1}\cdot \left(x_s(t)+\sum_{i\in\mathcal{C}_g}x_i(t)\right)
    \ge x_{l(\mathcal{C}_g,t)}(t)+\frac{1}{n+1},
  \end{eqnarray*} 
  since $x_i(t)\ge x_{l(\mathcal{C}_g,t)}(t)$ for all $i\in\mathcal{C}_g$ and $\left|\mathcal{C}_g\right|\le n$. Using 
  Proposition~\ref{prop_lr_shrinking} we conclude $w(\mathcal{C}_g,t+1)\le w(\mathcal{C}_g,t)-\frac{1}{n+1}$. The widths of the 
  other {\connectivity} components do not increase. It may happen that a certain {\connectivity} component, e.g.\ $\mathcal{C}_g$ 
  itself, decomposes into several components in the considered time step, i.e., $\mathcal{C}_g$ may not be a 
  {\connectivity} component at time $t+1$. Nevertheless, the summed widths of the respective components 
  is not larger than it was originally, while in one component of $\mathcal{C}_g$ a contraction of at least $\frac{1}{n+1}$ occurs. Thus, we 
  have $w(\mathcal{G}_{t+1}',t+1)\le w(\mathcal{G}_t',t)-\frac{1}{n+1}$.      
\end{proof}

\begin{corollary}
  $f(n,1)\in O(n^2)$.
\end{corollary}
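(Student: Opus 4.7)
The plan is to combine Lemma~\ref{lemma_improvement} with two independent counting arguments. At each time step before convergence I would choose the strategic agent's opinion according to the lemma, so that the step falls into either case~(1) or case~(2). Writing $T_1$ and $T_2$ for the number of times each case occurs along the run, the convergence time is at most $T_1+T_2$, and I would bound each summand separately by $O(n^2)$.

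For $T_1$ I would exploit that every case~(1) step strictly increases the weight of some non-strategic agent. Since weights are non-negative integers and, by the preceding proposition, weakly increasing in $t$, any such increase strictly decreases the number of distinct values taken by $\{x_i(t) : i\in N\}$. Starting from at most $n$ distinct opinions, this can happen at most $n-1$ times, so $T_1\le n-1$.

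For $T_2$ I would use the total width $w(\mathcal{G}_t',t)$ as a potential and argue it is monotonically non-increasing along the whole run: case~(2) decreases it by at least $\frac{1}{n+1}$, while the "and the width does not increase" clause of case~(1) takes care of the remaining steps. The initial value is bounded by $n-1$: within a single connected component of $\mathcal{G}_0'$ consecutive opinions in sorted order must lie within distance~$1$ (otherwise the component would split), so a component with $k$ agents has width at most $k-1$, and summing over components gives at most $n-h\le n-1$. Since the system has converged precisely when every component has width~$0$, i.e.\ when the total width reaches $0$, the number of case~(2) steps satisfies $T_2\le (n-1)(n+1)=n^2-1$.

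Adding the two bounds gives a convergence time of at most $(n-1)+(n^2-1)=O(n^2)$, which is the desired conclusion. The only delicate point is the monotonicity of the width across case~(1) steps, where the strategic agent is placed far from the active component; but this is exactly what the first clause of Lemma~\ref{lemma_improvement} asserts (and is implicit in its proof via Proposition~\ref{prop_lr_shrinking} together with the standard fact that, absent a strategic influence, widths of 1D components do not grow). Everything else is bookkeeping.
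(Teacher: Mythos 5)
Your proposal is correct and follows essentially the same argument as the paper: bound the number of Case~(1) steps via the monotone integer weights and the number of Case~(2) steps via the width potential $w(\mathcal{G}_t',t)\le n-1$ decreasing by $\frac{1}{n+1}$ each time. The only difference is that you carry out the sharper $T_1\le n-1$ count (via merging coincidence classes) that the paper only mentions parenthetically, while the paper settles for the cruder bound of $n^2$; both yield $O(n^2)$.
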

\begin{proof}
  Since the weight of every agent is a non-negative integer being at most equal to $n$ and it is weakly increasing over time, 
  Case~(1) of Lemma~\ref{lemma_improvement} can occur at most $n^2$ times. (A more refined analysis would yield that we need to 
  consider this case at most $n-1$ times.) Since the maximum difference between two neighboring agents is $1$, we have 
  $w(\mathcal{G}_0',0)\le n-1$. Thus, Case~(2) of Lemma~\ref{lemma_improvement} can occur at most $(n-1)\cdot(n+1)\le n^2$ times.
\end{proof}

Using one strategic agent the upper bound for the (optimal) convergence time could be improved from $O(n^3)$ to $O(n^2)$, but still 
we do not know whether the strategic agent was necessary, since $f(n,0)\in O(n^2)$ may also be true. To see that already one 
strategic agent can significantly decrease the (optimal) convergence time we consider a specific parametric example:

\begin{lemma}
  \label{lemma_equidistant}
  For $n\in\mathbb{N}_{>0}$ consider the HK system with starting opinions $x_i(0)=i-1$ for all $1\le i\le n$, i.e., the \textit{equidistant} 
  configuration. The convergence time of this example is in $\Omega(n)$ while its optimal convergence time for $m=1$ strategic 
  agent is in $O\!\left(n^{3/4}\right)$.
\end{lemma}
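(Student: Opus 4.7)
The plan is to prove the two bounds by separate arguments.

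For the $\Omega(n)$ lower bound on the unassisted convergence time, I would use a finite-propagation-speed induction. My inductive claim is that for all $0 \le t \le \lfloor (n-2)/2 \rfloor$ and every index $i$ with $t+1 \le i \le n-t$ we have $x_i(t) = i-1$. The base case $t=0$ is immediate. For the inductive step, the three agents at indices $i-1, i, i+1$ sit at positions $i-2, i-1, i$ by the inductive hypothesis (these indices lie in the required range), while agents at indices $j \le t$ have opinions strictly less than $t$ by strict ordering: ordering is preserved by Proposition~\ref{prop_ordering}, and strictness propagates because the leftmost moved agent has a strictly smaller neighborhood than its right neighbor at each step, so $x_j(t+1) \ne x_{j+1}(t+1)$ by Proposition~\ref{prop_coincidence}. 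Hence no agent outside the triple $\{i-1, i, i+1\}$ is within distance one of $x_i(t) = i-1$, and averaging the three nearby opinions returns $i-1$. As long as the middle block is non-empty, the central interior agent has three distinct neighbors at mutual distance exactly one and is therefore not frozen, so the system has not yet converged. This forces the convergence time to be at least $\lfloor (n-2)/2 \rfloor \in \Omega(n)$.

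For the $O(n^{3/4})$ upper bound with one strategic agent I would design an explicit two-phase control, parametrized by a block size $k$ to be tuned at the end. In the \emph{splitting phase}, the strategic agent is used to sequentially break the single initial influence component of width $n-1$ into $\Theta(n/k)$ disjoint subclusters of size $\Theta(k)$: a single new split is created by placing the strategic agent at a carefully chosen opinion inside the currently unsplit part, so that averaging with the strategic opinion sharply contracts a short window of opinions around it, while an agent $\Theta(k)$ positions further out loses the influence edge to its neighbor and the resulting gap exceeds one. If each such split can be produced in $O(1)$ time steps, the splitting phase takes $O(n/k)$ steps in total. In the \emph{natural relaxation phase} each subcluster of size at most $k$ converges under the uncontrolled Hegselmann--Krause dynamics in $O(k^3)$ steps by the classical $O(n^3)$ bound, and the subclusters relax in parallel (since they are separated by gaps larger than the influence radius). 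The total convergence time is then $O(n/k + k^3)$, minimized at $k = n^{1/4}$ to yield $O(n^{3/4})$.

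The main obstacle is the splitting phase: one must verify by a direct local computation---using the equidistant geometry of the initial configuration and the explicit one-step averaging formula---that a single well-chosen placement of the strategic agent really does produce a new gap of width exceeding one within $O(1)$ time steps, and without merging already-separated subclusters. The analysis is tractable because the local effect of averaging with the strategic opinion admits an explicit closed form, but pinning down exactly how large a window a single placement can cleanly detach is what fixes the block size $k=n^{1/4}$ and the precise constants in the $O(n^{3/4})$ bound.
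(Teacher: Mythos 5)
Your proposal is correct and follows essentially the same route as the paper: the identical finite-propagation induction $x_i(t)=i-1$ for $t+1\le i\le n-t$ gives the $\Omega(n)$ bound, and the upper bound uses exactly the paper's splitting-then-relaxation scheme with the trade-off $O(n/k+k^3)$ optimized at $k=n^{1/4}$. The one computation you defer is carried out in the paper by placing the strategic agent one unit beyond the $k$th agent from an end (that agent is pulled $1/4$ outward while its inner neighbor does not move, so the gap becomes $5/4>1$ after a single step), and by alternating the end from which blocks are cut so that the residual cluster stays equidistant up to a single exceptional gap of $\frac{1}{2}$ --- a detail worth keeping in mind, since always cutting from the same end would let the opposite end degrade under the free dynamics over the $\Theta(n^{3/4})$ steps of the splitting phase.
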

\begin{proof}
  Via induction one can easily show that $x_i(t)=i-1$ for all $t+1\le i\le n-t$, i.e., the movement starts from the two ends
  of the chain of neighbors and concerns two additional agents at each time step. Thus the convergence time is at least $\frac{n-2}{2}$. 
  Actually, the convergence time of the equidistant configuration is given by $\frac{5n}{6}+O(1)$. An exact formula was stated 
  in \cite{kurz2014long} without a proof and rigorously proven in \cite{hegarty2014hegselmann}. 
  
  For the other direction we assume w.l.o.g.\ $n\ge 3^4=81$ and set $k=\left\lfloor n^{1/4}\right\rfloor$. At the beginning 
  all $n$ non-strategic agents are in a single {\connectivity} component and each neighbor of an agent is distance $1$ apart. We 
  inductively will cut off groups of $k$ agents. 
  
  At step 1 there are $n$ remaining agents with distances $1,\dots,1$. Let $x_1$ be the position of the opinion of the $k$th 
  non-strategic agent at time $0$. Setting the opinion of the strategic agent $s$ to $x_1-1$ at time $0$, the first $k$ non-strategic 
  agents are separated from the last $n-k$ ones at time $1$. For the remaining steps we ignore the first $k$ agents.
  
  At time $1$ the distances between the remaining $n-k$ agents are given by $1,\dots,1,\frac{1}{2}$. For step 2 we consider 
  the opinion $x_2$ of the $k$th non-strategic agent counted from the right at time $1$. Setting the opinion of the strategic 
  agent to $x_2+1$ at time $1$, the last $k$ non-strategic agents are separated from the first $n-2k$ ones at time $2$. For the 
  remaining steps we ignore the last $k$ agents.

  At time $2$ we have to consider $n-2k$ agents with distances $\frac{1}{2},1,\dots,1$. So, up to symmetry we are in the same 
  situation as in the previous step. We iterate the separation process using the strategic agent until we end up with at 
  most $\left\lceil n^{3/4}\right\rceil +2$ groups of cardinality at most $k$ after at most $\left\lceil n^{3/4}\right\rceil +2$ 
  time steps.
  
  The dynamics of each of these groups can be considered separately. After the separation process has finished we place the opinion
  of the strategic agent far away from all other opinions so that it does not play a role. Since any HK system with at most $k$ 
  non-strategic agents converges in $O(k^3)$ time, the overall convergence time is in $O\!\left(n^{3/4}\right)$.       
\end{proof}

Next we consider the example from \cite{wedin2014quadratic} yielding the quadratic lower bound bound $f(n,0)\in\Omega(n^2)$:
\begin{lemma}
  For an integer $k\ge 10$ we consider the so-called \textit{dumbbell} configuration with $3k+1$ non-strategic agents, where
  \begin{enumerate}
    \item[(1)] $k$ agents have starting opinion $-\frac{1}{k}$,
    \item[(2)] one agent has starting opinion $i$ for each $0\le i\le k$, and 
    \item[(3)] $k$ agents have starting opinion $k+\frac{1}{k}$.
  \end{enumerate}
  The convergence time of this example is in $\Omega(n^2)$ while its optimal convergence time for $m=1$ strategic 
  agent is in $O\!\left(n^{3/4}\right)$.
\end{lemma}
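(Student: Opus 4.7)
The lower bound $\Omega(n^2)$ on the convergence time in the absence of a strategic agent is exactly the result proved for this dumbbell configuration in \cite{wedin2014quadratic}, so the task is to show that a single strategic agent can bring the convergence time down to $O(n^{3/4})$. I propose to adapt the iterative cutoff strategy of Lemma~\ref{lemma_equidistant} after a brief preprocessing phase that detaches the two heavy clumps from the central chain.

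In phase one, which takes two time steps, I use the strategic agent to peel the left clump (together with agent $k+1$) and then the right clump (together with agent $2k+1$) off the central chain. At $t=0$ I set the strategic agent's opinion to $-1$; a direct averaging calculation yields $x_{\text{left clump}}(1)=-\tfrac{2}{k+2}$, $x_{k+1}(1)=-\tfrac{1}{k+3}$, and $x_{k+2}(1)=1$, so the gap between agents $k+1$ and $k+2$ at $t=1$ equals $1+\tfrac{1}{k+3}>1$ and the left cluster is disconnected from the middle. At $t=1$ I set the strategic agent's opinion to $k+1$ for the symmetric detachment on the right. Within each detached cluster the $k+1$ opinions lie in an interval of width $O(1/k)$ and share the same neighborhood, so Proposition~\ref{prop_coincidence} forces the cluster to coalesce to a single point in one further averaging step; once the cluster is a single point and the strategic agent is kept at distance more than $1$ from it, the cluster remains frozen.

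In phase two the central chain is an isolated configuration of $k-1$ agents at positions $\tfrac{3}{2},2,3,\dots,k-1$, i.e., equidistant except for the shortened leftmost gap $\tfrac{1}{2}$. I apply the cutoff strategy of Lemma~\ref{lemma_equidistant} always from the right end, where the chain is purely equidistant: at each step place the strategic agent at $x_1+1$, where $x_1$ is the opinion of the $\lfloor n^{1/4}\rfloor$-th agent counted from the right of the current connected component. The same averaging computation as in Lemma~\ref{lemma_equidistant} shows that this cleanly separates $\lfloor n^{1/4}\rfloor$ agents per time step. After $O(n^{3/4})$ iterations only groups of size at most $\lfloor n^{1/4}\rfloor$ remain, and each such group converges in $O(n^{3/4})$ additional time by the unconditional $O(n^3)$ upper bound of \cite{bhattacharyya2013convergence}. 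Summing the contributions of both phases gives the desired $O(n^{3/4})$.

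The main technical point to verify is that the two peripheral clusters, once coalesced, never re-enter the strategic agent's influence range during phase two. The strategic agent is always placed inside the interval $[\tfrac{3}{2},k-1]$ occupied by the central chain, the left coalescence point sits at approximately $-1/k$, and a direct computation shows that the right coalescence point sits strictly above $k$ at a distance of order $1/k^2$ from $k$; for $k\ge 10$ the resulting distances to the strategic agent both exceed $1$, as required. The shortened leftmost gap of the central chain poses no additional obstacle either, precisely because every cut is performed at the unperturbed right end.
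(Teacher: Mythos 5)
Your phase one is correct and is in fact a different (and arguably cleaner) preprocessing than the paper's: the paper places the strategic agent at $2$ and then at $k-2$, which leaves small frozen clusters near those points and a residual chain from agent $k+5$ to $2k-3$; you place it at $-1$ and then at $k+1$, and your computed values $x_{\text{left clump}}(1)=-\tfrac{2}{k+2}$, $x_{k+1}(1)=-\tfrac{1}{k+3}$, $x_{k+2}(1)=1$ check out, as does the symmetric right detachment and the isolation of the two coalesced clumps from all subsequent strategic placements (your ``order $1/k^2$'' for the right coalescence point should be order $1/k$, but the conclusion you draw from it is unaffected).

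The gap is in phase two, precisely in your decision to cut \emph{always from the right end}. The left end of the residual chain $\tfrac{3}{2},2,3,\dots,k-1$ is then never touched, and a free end of an equidistant chain does not stay put: the boundary perturbation propagates inward by one agent per time step (this is exactly the content of $x_i(t)=i-1$ only for $t+1\le i\le n-t$ in Lemma~\ref{lemma_equidistant}). Your cutting front removes $K=\lfloor n^{1/4}\rfloor$ agents per step from the right, while the perturbed region grows by one agent per step from the left, so the two fronts meet after roughly $M/(K+1)$ steps, where $M=k-1$; at that moment a still-connected, no-longer-equidistant component of $\Theta(M/K)=\Theta\!\left(n^{3/4}\right)$ agents remains. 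Your separation computation is no longer valid there (it needs the $(K+1)$-st agent from the right to sit at unit gaps so that it stays put and the created gap exceeds $1$; if the perturbation has compressed that gap, pulling the $K$-th agent right by $\tfrac14$ need not disconnect anything), and your final appeal to the $O(k^3)$ bound only covers groups of size $O\!\left(n^{1/4}\right)$ --- applied to the leftover $\Theta\!\left(n^{3/4}\right)$-agent component it gives $O\!\left(n^{9/4}\right)$, which destroys the bound. The repair is exactly what Lemma~\ref{lemma_equidistant} already does: alternate the cuts between the two ends, so that each end is consumed before its perturbation penetrates deeper than the $\tfrac12$ gap adjacent to it. The configuration $\tfrac{3}{2},2,3,\dots,k-1$, i.e., all gaps equal to $1$ except one gap of $\tfrac12$ at an end, is precisely the case handled in that proof, so there is no need to avoid the shortened left gap --- avoiding it is what breaks the argument.
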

\begin{proof}
  The convergence time of the dumbbell configuration has been already treated in \cite{wedin2014quadratic}, so that we only 
  consider the optimal convergence time. 
  
  At time $t=0$ we place the strategic agent at $2$. Using the usual ordered numbering of the non-strategic agents, we obtain
  \begin{itemize}
    \item $x_i(1)=-\frac{1}{k+1}$ for all $1\le i\le k$, $x_{k+1}(1)=0$,
    \item $x_{k+2}(1)=\frac{5}{4}$, $x_{k+3}(1)=2$, $x_{k+4}(1)=\frac{11}{4}$,
    \item $x_{k+1+i}(1)=i$ for all $4\le i\le k$, and $x_j(1)=k+\frac{1}{k+1}$ for all $2k+2\le j\le 3k+1$.
  \end{itemize}
  At time $t=1$ we place the strategic agent at $k-2$. With this, we obtain
  \begin{itemize}
    \item $x_i(2)=-\frac{k}{(k+1)^2}$ for all $1\le i\le k+1$, 
    \item $x_{k+2}(2)=\frac{13}{8}$, $x_{k+3}(2)=2$, $x_{k+4}(2)=\frac{19}{8}$,
    \item $x_{k+5}(2)=\frac{9}{2}$, $x_{k+1+i}=i$ for all $5\le i\le k-4$,
    \item $x_{2k-2}(2)=k-\frac{11}{4}$, $x_{2k-1}(2)=k-2$, $x_{2k}(2)=k-\frac{5}{4}$,
    \item $x_{2k+1}(2)=k-\frac{1}{(k+1)(k+2)}$, and $x_{2k+1+i}(2)=k+\frac{k}{(k+1)^2}$ for all $1\le i\le k$.   
  \end{itemize}
  At time $t=3$ the agents $k+2$, $k+3$, and $k+4$ will converge to the joint opinion $2$. Similarly, at time $t=4$ the
  agents $2k-2$, $2k-1$, and $2k$ will converge to the joint opinion $k-2$. The last $k+1$ agents will converge to a joint
  opinion at time $t=3$. The remaining agents from $k+5$ to $2k-3$ form a chain of (almost) equal distances, i.e., with a 
  single exception all distances are equal to $1$, while the exceptional distance is equal to $\frac{1}{2}$ -- a case that has 
  already been considered in the proof of Lemma~\ref{lemma_equidistant}. So, reusing the corresponding reasoning, we 
  conclude a convergence time of $O\!\left(n^{3/4}\right)$.  
\end{proof}

So, we have seen that the control of a single strategic agent can accelerate the convergence time of the equidistant configuration 
by at least $\Omega\!\left(n^{1/4}\right)$. Improving the upper bound for the convergence time $f(n,0)$, i.e., in the absence of strategic 
agents, would increase this gap even more. Besides a tighter analysis, an improved strategy for the strategic agent is also conceivable.
For the dumbbell configuration the demonstrated acceleration is at least of order $\Omega\!\left(n^{5/4}\right)$. However, the 
\textit{power} of a single strategic agent alone is limited, as we will see in the next two results.

\begin{lemma}
  \label{lemma_lower_bound_1_single_component}
  For each integer $k\ge 15$ consider the HK system given by
  \begin{enumerate}
    \item[(a)] $k^2$ non-strategic agents with starting opinion $-\frac{2}{3}$,
    \item[(b)] $k$ non-strategic agents with starting opinion $0$,  
    \item[(c)] $k^2$ non-strategic agents with starting opinion $\frac{2}{3}$, and
    \item[(d)] $m=1$ strategic agent.
  \end{enumerate}
  Let $x_1(t)$ denote the opinion of the agents in (a), $x_2(t)$ denote the opinion of the agents in (b), and 
  $x_3(t)$ denote the opinion of the agents in (c) at time $t\in\mathbb{N}_{\ge 0}$. Then, for all $0\le t\le\frac{k}{8}$ we
  have
  \begin{enumerate}
    \item[(1)] $-\frac{2}{3}-\frac{t}{k}\le x_1(t)\le -\frac{2}{3}+\frac{t}{k}$,
    \item[(2)] $\frac{2}{3}-\frac{t}{k}\le x_3(t)\le \frac{2}{3}+\frac{t}{k}$,
    \item[(3)] $-\left(\frac{t}{k}+\frac{t^2}{k^2}\right) \le x_2(t)\le\left(\frac{t}{k}+\frac{t^2}{k^2}\right)$, 
    \item[(4)] $x_2(t)-x_1(t)\le 1-\frac{1}{k}$,
    \item[(5)] $x_3(t)-x_2(t)\le 1-\frac{1}{k}$, and
    \item[(6)] $x_3(t)-x_1(t)> 1$
  \end{enumerate}
  independently of the precise opinions of the strategic agent.   
\end{lemma}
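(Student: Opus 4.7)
The plan is to prove all six items simultaneously by induction on $t$. First, Proposition~\ref{prop_coincidence} together with the identical starting opinions in each of (a), (b), (c) guarantees that at every time $t$ the $k^2$ left agents share a common opinion $x_1(t)$, the $k$ middle agents a common opinion $x_2(t)$, and the $k^2$ right agents a common opinion $x_3(t)$, so the whole configuration is described by this triple (plus the strategic opinion $s(t)$). The base case $t=0$ is immediate. For the inductive step, (4)--(6) at time $t$ fix the influence graph on the non-strategic agents: middle is linked to both left and right, but left and right are disconnected; the strategic agent is free to enter or leave any of the three neighborhoods.

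The shifted coordinates $y_1 = x_1 + 2/3$, $y_2 = x_2$, $y_3 = x_3 - 2/3$ are convenient because the constants cancel in the update equations. For the left clump the update reads
\[
  x_1(t+1) - x_1(t) = \frac{k(x_2(t)-x_1(t)) + \mathbf{1}_{s,L}(s(t)-x_1(t))}{k^2 + k + \mathbf{1}_{s,L}},
\]
and since each term in the numerator is at most $1$ in absolute value (by the neighborhood condition) the total per-step change is at most $(k+1)/(k^2+k+1) \le 1/k$; summing yields (1). An identical argument yields (2). For (3), the middle update takes the form
\[
  y_2(t+1) = \frac{k^2\bigl(y_1(t)+y_3(t)\bigr) + k\,y_2(t) + \mathbf{1}_{s,M}\,s(t)}{2k^2 + k + \mathbf{1}_{s,M}},
\]
and using $|y_1|,|y_3|\le t/k$, $|y_2|\le t/k+t^2/k^2$, together with the fact that the strategic agent contributes at most $|y_2|+1$ to the numerator when it is present, a direct bound gives $|y_2(t+1)| \le t/k + O(1/k)$, well within the required $(t+1)/k + (t+1)^2/k^2$.

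It remains to propagate (4)--(6) forward. From (1)--(3) at time $t+1$ one has
\[
  x_2(t+1) - x_1(t+1) \;\le\; \tfrac{2}{3} + \tfrac{2(t+1)}{k} + \tfrac{(t+1)^2}{k^2} \;\le\; \tfrac{2}{3} + \tfrac{1}{4} + \tfrac{1}{64} = \tfrac{179}{192}
\]
for $t+1 \le k/8$, and the inequality $179/192 \le 1 - 1/k$ holds precisely for $k \ge 15$, giving (4); (5) is symmetric; (6) follows from $x_3-x_1 = 4/3 + y_3 - y_1 \ge 4/3 - 2(t+1)/k \ge 13/12 > 1$. The main obstacle is exactly this last piece of arithmetic: the constants $-2/3,0,2/3$ in the initial configuration and the time window $t\le k/8$ are calibrated just tightly enough to keep both $x_2-x_1$ and $x_3-x_2$ strictly below $1-1/k$, which in turn forces the hypothesis $k\ge 15$; once these calibrations are in place, the per-step bounds of order $1/k$ on each clump are essentially automatic from the fact that the strategic agent's weight $1$ is negligible against the weights $k$ and $k^2$ of the non-strategic clumps.
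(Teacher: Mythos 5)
Your proposal is correct and follows essentially the same route as the paper: induction on $t$, using (4)--(6) to pin down the influence graph, bounding the per-step drift of each clump by $1/k$ (resp.\ the refined bound for the middle clump), and then closing the loop on (4)--(6) by arithmetic. The only cosmetic difference is that you verify (4) by substituting $t+1\le k/8$ directly to get $179/192\le 1-\tfrac{1}{k}$, whereas the paper solves the quadratic inequality $t'\le -k+\tfrac{1}{3}\sqrt{12k^2-9k}$; both pin down the same threshold $k\ge 15$.
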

\begin{proof}
  We remark that inequalities (4)-(6) say that the agents of type (a) influence agents of types (a) and (b), agents of type (b) 
  influence all non-strategic agents, and agents of type (c) influence agents of types (b) and (c). The strategic agent may influence 
  agents of none, some or all types.
  
  We prove by induction on $t$ and observe that for $t=0$ all statements are valid. Now let $t\in \mathbb{N}_{\ge 0}$, 
  with $t\le\frac{k}{8}-1$, and we assume that all six inequalities are valid for $t$.
  
  Since the strategic agent can pull with a force of at most $1$, we have
  $$
    x_1(t+1)\ge x_1(t)+\frac{-1+k^2\cdot 0+k\cdot 0}{1+k^2+k}\ge x_1(t)-\frac{1}{k}\overset{\text{(1)}}{\ge} -\frac{2}{3}-\frac{t+1}{k}.
  $$ 
  Using Inequality~(4) at time $t$ we conclude
  $$
    x_1(t+1)\le x_1(t)+\frac{1+k^2\cdot 0+k\cdot\left(1-\frac{1}{k}\right)}{1+k^2+k}\le  x_1(t)+\frac{1}{k}\overset{\text{(1)}}{\le} -\frac{2}{3}+\frac{t+1}{k}.
  $$
  So, Inequality~(1) is also valid at time $t+1$. The validity of Inequality~(2) is proven analogously. 
  
  In order to prove Inequality~(3) we ask how left the opinions of agents of type (b) can be at time $t+1$. The extreme situation 
  occurs if the strategic agent pulls with force $1$ to the left and the agents of types (a), (b), and (c) are located 
  as left as possible. So, we obtain
  \begin{eqnarray*}
    x_2(t+1)&\ge& \frac{-\left(1+\frac{t}{k}+\frac{t^2}{k^2}\right)+k^2\cdot\left(-\frac{2}{3}-\frac{t}{k}\right)+k\cdot\left(0-\frac{t}{k}-\frac{t^2}{k^2}\right)+k^2\cdot\left(\frac{2}{3}-\frac{t}{k}\right)}{1+k^2+k+k^2}\\
    &=& -\frac{2tk+t+1}{2k^2+k+1} -\frac{\frac{t}{k}+\frac{t^2}{k}+\frac{t^2}{k^2}}{2k^2+k+1}\\
    &\ge& -\frac{t+1}{k} -\frac{(t+1)^2}{k^2}. 
  \end{eqnarray*}
  Analogously, we conclude $x_2(t+1)\le \frac{t+1}{k} +\frac{(t+1)^2}{k^2}$ so that Inequality~(3) is also valid at time $t+1$.
  
  Setting $t'=t+1$ and using inequalities (1) and (3) at time $t'$, we have $x_2(t')-x_1(t')\le \frac{2}{3}+\frac{2t'}{k}+\frac{t'^2}{k^2}$. 
  For all $0\le t'\le -k+\frac{1}{3}\cdot\sqrt{12k^2-9k}$ the right hand side is at most $1-\frac{1}{k}$. Since $k\ge 15$ and $t'\le \frac{k}{8}$ we have 
  $t'\le \frac{k}{8}\le -k+\frac{1}{3}\cdot\sqrt{12k^2-9k}$, so that Inequality~(4) is valid for all $t\le \frac{k}{8}-1$. Analogously, we
  conclude the validity of Inequality~(5).
  
  Setting $t'=t+1$ and using inequalities (1) and (2), we have $x_3(t')-x_1(t')\ge \frac{4}{3}-\frac{2t'}{k}\ge \frac{13}{12}>1$, 
  so that Inequality~(6) is valid for all $t\le \frac{k}{8}$.   
\end{proof}

\begin{corollary}
  \label{cor_lower_bound}
  $f(n,1)\in \Omega(\sqrt{n})$.
\end{corollary}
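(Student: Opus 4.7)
The plan is to derive the corollary directly from Lemma~\ref{lemma_lower_bound_1_single_component}. For each integer $k \ge 15$ that lemma provides a HK instance with exactly $n = 2k^2 + k$ non-strategic agents (plus one strategic agent), and guarantees that inequalities (1)--(6) hold simultaneously at every time $0 \le t \le k/8$, for every possible sequence of opinions chosen by the strategic agent.

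The key observation I would make is that, at every such time $t$, the system has \emph{not} converged. Indeed, inequalities (4) and (5) give $x_2(t)-x_1(t)\le 1-\tfrac{1}{k}$ and $x_3(t)-x_2(t)\le 1-\tfrac{1}{k}$, so every non-strategic agent of type (b) has all $n$ non-strategic agents in its non-strategic neighborhood $\mathcal{N}_i'(t)$. On the other hand, only the $k$ agents of type (b) share the common opinion $x_2(t)$, so $w_t(x_i(t)) = k < n = |\mathcal{N}_i'(t)|$. Hence a type-(b) agent fails the freezing condition of the definition of \emph{frozen}, so the system is not converged at time $t$. This holds for every strategy of the strategic agent, so the optimal convergence time of this instance strictly exceeds $k/8$.

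To finish, I would translate the conclusion into an $\Omega(\sqrt{n})$ statement using the monotonicity $f(n,m)\le f(n',m)$ for $n'\ge n$ noted just after the definition of $f$. For an arbitrary $n$ (larger than some absolute constant coming from $k\ge 15$), pick the largest integer $k$ with $2k^2+k\le n$; then $k = \Theta(\sqrt{n})$, and
\[
  f(n,1)\;\ge\; f(2k^2+k,1)\;>\;k/8\;=\;\Omega(\sqrt{n}).
\]

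There is essentially no obstacle here: the heavy lifting was done in the previous lemma, which was carefully tailored so that no strategic play can break apart the three opinion clusters within the relevant time window. The only subtlety is to spell out the uniform-over-strategies reason why the system is not converged, namely the mismatch between the non-strategic neighborhood size $n$ and the weight $k$ of the middle group, rather than any statement about the exact positions of the opinions (which depend on the strategic agent's choices).
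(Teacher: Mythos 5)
Your proposal is correct and matches the intended derivation: the paper states the corollary without proof precisely because it follows from Lemma~\ref{lemma_lower_bound_1_single_component} exactly as you describe, via $n=2k^2+k$, non-convergence for all $t\le k/8$ uniformly over the strategic agent's choices, and monotonicity of $f$ in $n$. Your non-convergence argument via the frozen/weight mismatch is sound (the implicit facts you need, namely $x_1(t)\le x_2(t)\le x_3(t)$ and $x_1(t)\ne x_2(t)$, follow from Proposition~\ref{prop_ordering} and inequalities (1) and (3) of the lemma), though one could equally just observe that (1), (3) and (4) give two distinct opinions at distance at most $1-\frac{1}{k}\le 1$.
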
 

We remark that the lower bound from Corollary~\ref{cor_lower_bound} can be increased to $f(n,1)\in \Omega\!\left(n^{2/3}\right)$, see 
Lemma~\ref{lemma_lower_bound_alpha}. However, the result from Lemma~\ref{lemma_lower_bound_1_single_component} has the 
advantage that it uses a connected starting configuration (instead of many separated {\connectivity} components) and does not rely 
on the convergence analysis of the dumbbell configuration.  

If the number of strategic agents is sufficiently increased, with respect to the number of non-strategic agents, then 
the optimal convergence time can be decreased down to a constant.

\begin{theorem}
  \label{thm_brute_force}
  For each $n\in\mathbb{N}_{>0}$ we have $f(n,9n)\le 2$ and $f(n,9n)\in\Theta(1)$.
\end{theorem}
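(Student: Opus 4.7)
The plan is to exhibit, for any starting configuration of $n$ non-strategic agents, a strategy for placing the $9n$ strategic agents at times $t=0$ and $t=1$ that drives the system to a frozen state by time $t=2$. Combined with the trivial lower bound $f(n,9n)\ge 1$ (which holds whenever the initial configuration contains two agents at distinct opinions within distance $1$), this will give $f(n,9n)\in\Theta(1)$.

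The strategy is built around a two-step \emph{cluster-and-freeze} scheme. At $t=0$, sort the non-strategic agents by opinion and partition them into consecutive blocks $G_1,\dots,G_k$. For each block $G_j$ pick a target $y_j$ inside $[x_{l(G_j)}(0),x_{r(G_j)}(0)]$ and place $9|G_j|$ of the strategic agents at (or very close to) $y_j$; this exhausts all $9n$ strategic agents. The partition and the $y_j$ are chosen so that (a) every $i\in G_j$ satisfies $|x_i(0)-y_j|\le 1$, so the whole batch of $9|G_j|$ strategic agents lies in $\mathcal{N}_i(0)$, and (b) the $y_j$ are separated enough that the resulting time-$1$ clusters are pairwise more than $1$ apart. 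By Equation~(\ref{eq_hk_dynamics}) the strategic batch at $y_j$ then dominates the update rule: the strategic weight in each $x_i(1)$ is at least $\tfrac{9}{10}$ of the total, so $x_i(1)$ lies in a short window around $y_j$. A small calculation, balancing this window size against the minimum spacing of the $y_j$, is where the factor $9$ enters quantitatively.

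At $t=1$, move all $9n$ strategic agents far outside the range of non-strategic opinions so that they are not neighbors of anybody. Each block $G_j$ is now a cluster of width at most $1$, and distinct clusters are more than $1$ apart, so within each cluster every pair of agents is a mutual neighbor while no pair from different clusters is. Proposition~\ref{prop_coincidence} then forces all members of each cluster to coincide at $t=2$, and the resulting points are inherited from the $y_j$ and remain more than $1$ apart; the configuration is frozen. The main obstacle will be the construction of the partition when the initial opinions are densely packed: one must verify that (a) and (b) can be realized simultaneously, which requires the $9{:}1$ ratio of strategic to non-strategic influence to suppress cross-talk from non-strategic agents that happen to lie in $[y_j-1,y_j+1]\cap[y_{j+1}-1,y_{j+1}+1]$.
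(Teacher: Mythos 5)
Your overall plan---one round in which each group of non-strategic agents is pulled onto a target by a large batch of co-located strategic agents, followed by one round in which the strategic agents withdraw and each resulting cluster of diameter at most $1$ collapses to a point---is exactly the structure of the paper's proof, including the appeal to Proposition~\ref{prop_coincidence} and the trivial lower bound. The gap is in the quantitative core, which you flag as ``the main obstacle'' but do not resolve, and the specific allocation you propose would not resolve it. You place $9|G_j|$ strategic agents at $y_j$ and claim that the strategic weight in each update $x_i(1)$, $i\in G_j$, is at least $\tfrac{9}{10}$ of the total. This compares the batch size only against $|G_j|$, but the neighborhood $\mathcal{N}_i(0)$ of a block member can contain non-strategic agents from \emph{outside} $G_j$: anyone within distance $1$ of $x_i(0)$, hence within distance $2$ of $y_j$, is a neighbor of $i$, whereas the batches at other targets need not be (they can lie up to distance $2$ from $x_i(0)$). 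A block of size $1$ sitting next to a dense block therefore sees a strategic-to-non-strategic ratio far below $9{:}1$ in its own update, the window around $y_j$ is not controlled, and your separation property (b) can fail. So the $\tfrac{9}{10}$ estimate, which carries the whole argument, is not established by the proposed allocation rule.

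The paper's fix is precisely at this point: the batch at a target position $p$ has size $3(a+b+c)$, where $b$ counts the non-strategic agents within distance $1$ of $p$ and $a$, $c$ count those in the shells $[p-2,p-1)$ and $(p+1,p+2]$---i.e., the batch is sized to dominate \emph{every possible non-strategic neighbor of a block member}, not just the block itself. This yields $|x_g(1)-p|\le\tfrac12$ unconditionally, and the total budget stays below $9n$ because each non-strategic agent is counted at most three times over all target positions; this double-counting argument is where the constant $9$ actually comes from. In addition, the targets are spaced $2+\varepsilon$ apart inside each connectivity component (components of width at most $1$ receiving no strategic agents at all), which guarantees that each non-strategic agent is influenced by exactly one batch and that the time-$1$ clusters are more than $1$ apart; this is the construction of the partition that you leave open. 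Without these two ingredients the argument does not close; with them it becomes the paper's proof.
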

\begin{proof}
  At first we determine a set of positions where we place the opinions of the strategic agents in the first round. To this end, let 
  us denote the {\connectivity} components of $\mathcal{G}_0'$ by $\mathcal{C}_1,\dots,\mathcal{C}_h$. For each index $1\le i\le h$ 
  we proceed as follows: We set $w_i=\left\lceil w\!\left(\mathcal{C}_i\right)\right\rceil$, i.e., $w_i$ is a non-negative integer 
  such that $w\!\left(\mathcal{C}_i\right)\in (w_i-1,w_i]$. For each {\connectivity} component $\mathcal{C}_i$ we will use 
  $k_i$ out of the $9n$ strategic agents in the first round.
  
  If $w_i\le 1$ we place strategic agents for $\mathcal{C}_i$ at 
  $k_i=0$, i.e., no, positions. To ease the subsequent notation we set $p_0^i=\frac{1}{2}\cdot\left(x_{l(\mathcal{C}_i,0)}(0)+
  x_{r(\mathcal{C}_i,0)}(0)\right)$, i.e., we choose the center of the corresponding interval of opinions. 
  
  Now assume $w_i\ge 2$. If $w_i$ is even, we place strategic agents at the $k_i=w_i/2$ positions
  $p_j^i=x_{l(\mathcal{C}_i,0)}(0)+1+\left(2+\varepsilon_i\right)\cdot j$, where $0\le j<k_i$, $j\in\mathbb{N}_{\ge 0}$. Here we choose $\varepsilon_i>0$ 
  suitably small such that $p_{k_i-1}^i\le x_{r(\mathcal{C}_i,0)}(0)$ and the open intervals $(p_j^i+1,p_{j+1}^i-1)$ do not 
  contain opinions of non-strategic agents at time $0$ for $0\le j<k_i-1$. If $w_i$ is odd, we place strategic agents at the 
  $k_i=(w_i+1)/2$ positions $p_j^i=x_{l(\mathcal{C}_i,0)}(0)+\left(2+\varepsilon_i\right)\cdot j$, where $0\le j<k_i$. 
  Here we again choose $\varepsilon_i$ suitably small such that $p_{k_i-1}^i\le x_{r(\mathcal{C}_i,0)}(0)$ and the open 
  intervals $(p_j^i+1,p_{j+1}^i-1)$ do not contain opinions of non-strategic agents at time $0$ for $0\le j<k_i-1$.  
  
  With this we have $x_{l(\mathcal{C}_i,0)}(0)\le p_j^i\le x_{r(\mathcal{C}_i,0)}(0)$ 
  for all $0\le j<k_i$, i.e., strategic agents at these positions influence only agents from $\mathcal{C}_i$. Furthermore, if 
  $k_i>0$, each agent in $\mathcal{C}_i$ is influenced by strategic agents from exactly one position $p_j^i$. It remains to 
  determine the number of strategic agents that should be placed at position $p_j^i$.
  
  Let $b$ be the number of non-strategic agents with a starting opinion which is at most $1$ apart from $p_j^i$. By $a$ we denote 
  the number of non-strategic agents with starting opinion in $\left[p_j^i-2,p_j^i-1\right)$. Similarly, by $c$ we denote 
  the number of non-strategic agents with starting opinion in $\left(p_j^i+1,p_j^i+2\right]$. We place exactly $3\cdot(a+b+c)$ 
  strategic agents at position $p_j^i$ at time $0$. Let $g$ be an arbitrary non-strategic
  agent with starting opinion in $\left[p_j^i-1,p_j^i\right]$ and $\delta=p_j^i-x_g(0)$. With this we have
  $$
    x_g(1)\ge \frac{a_1\cdot \left(p_j^i-1-\delta\right)+b_1\cdot \left(p_j^i-\delta\right)+3\cdot(a+b+c)\cdot p_j^i}{a_1+b_1+3\cdot(a+b+c)},  
  $$   
  where $a_1\le a$ and $b_1\le b$. Since $\delta\le 1$ and $a_1+b_1\le a+b+c$, we have
  $$
    x_g(1)\ge p_j^i -\frac{2\cdot\left(a_1+b_1\right)}{\left(a_1+b_1\right)+3(a+b+c)}\ge p_j^i-\frac{1}{2}.
  $$
  For the other direction we have
  $$
    x_g(1)\le \frac{1\cdot \left(p_j^i-\delta\right)+3\cdot(a+b+c)\cdot p_j^i+b_2\cdot\left(p_j^i+1-\delta\right)}{1+3\cdot(a+b+c)+b_2},
  $$
  where $b_2\le b-1$. Since $\delta\ge 0$ and $b_2+1\le a+b+c$, we have
  $$
    x_g(1)\le p_j^i +\frac{b_2+1}{\left(b_2+1\right)+3\cdot(a+b+c)}\le p_j^i+\frac{1}{2}.
  $$ 
  Similarly, we conclude $p_j^i-\frac{1}{2}\le x_g(1)\le p_j^i+\frac{1}{2}$ for every non-strategic agent $g$ with 
  starting opinion in $\left[p_j^i,p_j^i+1\right]$. 
  
  If $k_i=0$, i.e., when we place no strategic agents for $\mathcal{C}_i$, then we also have 
  $p_0^i-\frac{1}{2}\le x_g(1)\le p_0^i+\frac{1}{2}$ for every non-strategic agent $g\in\mathcal{C}_i$.
  
  When determining the parameters $a$, $b$, and $c$ for the strategic agents at the positions $p_j^i$ every non-strategic 
  agent is counted at most thrice, so that the number of placed strategic agents at time $0$ is at most $3\cdot 3\cdot n=9n$. 
  We place the remaining strategic agents, if there are any, far away from all other opinions, so that they do not influence 
  non-strategic agents.
  
  At time $1$ the influence graph $\mathcal{G}_1'$ consists of unions of complete graphs, i.e., for every non-strategic agent $i$ 
  and every $j\in\mathcal{N}_i'(1)$ we have $\mathcal{N}_i'(1)=\mathcal{N}_j'(1)$, since the opinions of the non-strategic agents
  are clustered into intervals of length at most $1$ at time $1$.       
  
  No influence of strategic agents is needed to cause convergence at time $2$, so that we place the $9n$ strategic agents far away from 
  the opinions of the non-strategic agents. Thus, we have $f(n,9n)\le 2$. For $n\ge 2$ we can consider the example with starting 
  positions given by $i$ for all $0\le i\le n-1$, which has not converged at time $0$, so that $f(n,9n)\ge 1$ for $n\ge 2$ and 
  $f(n,9n)\in\Theta(1)$.  
\end{proof}

We remark that essentially we have used the capabilities of external control in the proof of Theorem~\ref{thm_brute_force} in a 
single round only. Theorem~\ref{thm_brute_force} states that $9n$ strategic agents are sufficient to control any HK system 
in such a way that it converges in a constant number of time steps. For $n\ge 5$ the convergence is, in general, as fast as possible, 
i.e.\ more strategic agents will not help to accelerate the convergence.

\begin{lemma}
  \label{lemma_not_too_fast}
  For $n\ge 5$ consider the example where the starting opinions of the non-strategic agents are given by 
  $x_1(0)=0$, $x_2(0)=\frac{1}{4}$, $x_3(0)=\frac{2}{3}$, $x_4(0)=\frac{5}{4}$, $x_5(0)=\frac{4}{3}$, and $x_i(0)=6$ 
  for $6\le i\le n$. Independently of the number $m$ of strategic agents and their precise positions, the optimal convergence 
  time for this example is at least $2$.
\end{lemma}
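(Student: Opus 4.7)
The plan is to exploit Proposition~\ref{prop_coincidence} together with Proposition~\ref{prop_bounded_movement}, so that the strategic agents cannot simultaneously (a) merge any of the first five agents into a common opinion and (b) spread them apart by more than the influence radius in only one time step.

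First I would compute the non-strategic neighborhoods $\mathcal{N}_i'(0)$ for $i\in\{1,\dots,5\}$ from the given distances. A short case check shows that agents $1$--$5$ form a single {\connectivity} component of $\mathcal{G}_0'$ (with pairwise distances up to $|x_2(0)-x_4(0)|=1$ inclusive), while $|x_1(0)-x_4(0)|=5/4$ and $|x_2(0)-x_5(0)|=13/12$ both exceed $1$. One then reads off that $\mathcal{N}_1'(0)=\{1,2,3\}$, $\mathcal{N}_2'(0)=\{1,2,3,4\}$, $\mathcal{N}_3'(0)=\{1,2,3,4,5\}$, $\mathcal{N}_4'(0)=\{2,3,4,5\}$ and $\mathcal{N}_5'(0)=\{3,4,5\}$, which are pairwise distinct.

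Next I would observe that the restriction of $\mathcal{N}_i(0)$ to $N$ equals $\mathcal{N}_i'(0)$, so no strategic placement can equalise two of these neighbourhoods for $i\ne j$ in $\{1,\dots,5\}$. By Proposition~\ref{prop_coincidence} the resulting opinions $x_1(1),\dots,x_5(1)$ are therefore pairwise distinct, and by Proposition~\ref{prop_ordering} (which survives the presence of strategic agents, as the excerpt notes) they remain strictly ordered, $x_1(1)<x_2(1)<x_3(1)<x_4(1)<x_5(1)$.

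Finally I would invoke Proposition~\ref{prop_bounded_movement} to bound the travel of the two extreme agents: $x_1(1)\ge x_1(0)-1=-1$ and $x_5(1)\le x_5(0)+1=7/3$, hence
\[
x_5(1)-x_1(1)\;\le\;\tfrac{7}{3}-(-1)\;=\;\tfrac{10}{3}\;<\;4.
\]
If the system were converged at time $1$, then since all five opinions are distinct we would need $|x_i(1)-x_j(1)|>1$ for every pair $i\ne j$ in $\{1,\dots,5\}$, and in particular $x_5(1)-x_1(1)>4$, contradicting the above. The agents $6,\dots,n$ at opinion $6$ can be dismissed separately: they are pairwise identical throughout, and since $5-\tfrac{7}{3}=\tfrac{8}{3}>1$ they stay disconnected from agents $1$--$5$ at time $1$, so they do not rescue convergence. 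There is no real obstacle; the only care needed is in verifying that each $\mathcal{N}_i'(0)$ is genuinely different (the distance $|x_2(0)-x_4(0)|=1$ lying on the boundary is the only slightly delicate spot).
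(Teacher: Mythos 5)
Your argument is correct and follows essentially the same route as the paper's proof: confine agents $1$--$5$ to $[-1,\tfrac{7}{3}]$ via Proposition~\ref{prop_bounded_movement}, deduce that their five opinions at time $1$ are pairwise distinct via Proposition~\ref{prop_coincidence}, and conclude by pigeonhole that two of them lie at positive distance at most $1$. You merely make explicit the neighborhood computation and the treatment of agents $6,\dots,n$ that the paper leaves implicit.
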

\begin{proof}
  Since $x_i(0)\in\left[0,\frac{4}{3}\right]$ for all $i\in \{1,2,3,4,5\}$, we can use Proposition~\ref{prop_bounded_movement} to 
  conclude $x_i(1)\in\left[-1,\frac{7}{3}\right]$ for all $i\le 5$ and $x_i(1)\ge 5$ for all $i>5$. Due to 
  Proposition~\ref{prop_coincidence} all non-strategic agents $i\in\{1,2,3,4,5\}$ have different 
  opinions at time $1$. Thus, by the pigeonhole principle, there exist two non-strategic agents $i,j\in \{1,2,3,4,5\}$ with
  $0<\Vert x_i(1)-x_j(1)\Vert\le 1$, i.e., the system has not converged at time $1$.
\end{proof}

\begin{lemma}
  For each $c_1,c_2\in\mathbb{R}_{>0}$ and each $\alpha\in [0,1)$ we have $f\!\left(n,c_1\cdot n^\alpha\right)\ge c_2$ for all 
  sufficiently large $n\in\mathbb{N}_{>0}$.
\end{lemma}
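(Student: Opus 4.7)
The plan is to scale up the heavy outer blocks of Lemma~\ref{lemma_lower_bound_1_single_component} so that the $m = \lfloor c_1 n^\alpha\rfloor$ strategic agents can only produce a shift of order $O\!\left(n^{\alpha-1}\right)$ per time step. Given $c_1,c_2\in\mathbb{R}_{>0}$ and $\alpha\in[0,1)$, set $T=\lceil c_2\rceil$. For each sufficiently large $n$, consider the configuration with $k_1=\lfloor(n-1)/2\rfloor$ non-strategic agents at $-\tfrac{2}{3}$, a middle block of $k_2=n-2k_1\in\{1,2\}$ non-strategic agents at $0$, and a further $k_1$ non-strategic agents at $\tfrac{2}{3}$. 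Since each block starts at a single opinion, Proposition~\ref{prop_coincidence} forces all agents inside a block to share their opinion at every time step, regardless of how the strategic agents are placed (any strategic agent is a neighbor of all or none of them). Thus the dynamics reduce to three scalar orbits $x_a(t),x_b(t),x_c(t)$.

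I would then prove by induction on $t\in\{0,1,\dots,T\}$, uniformly over the strategic choices, the bounds
$$
  \bigl|x_a(t)+\tfrac{2}{3}\bigr|\le \delta(t),\qquad \bigl|x_c(t)-\tfrac{2}{3}\bigr|\le \delta(t),\qquad |x_b(t)|\le C(T)\,\delta(t),
$$
where $\delta(t):=t\,(m+k_2)/k_1$ and $C(T)$ is a constant depending only on $T$. The inductive step mirrors the bookkeeping in Lemma~\ref{lemma_lower_bound_1_single_component}: the worst shift of $x_a$ in either direction is bounded by $(m+k_2)/(m+k_1+k_2)\le(m+k_2)/k_1$, attained when all $m$ strategic agents sit at $x_a(t)\pm 1$ and the middle block is at its extremal admissible position; symmetry handles $x_c$. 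For the middle block, substituting the inductive bounds into~(\ref{eq_hk_dynamics}) with strategic agents at $x_b(t)\pm 1$ causes the $\pm\tfrac{2}{3}$ contributions of the outer blocks to cancel, leaving a shift controlled by $\delta(t)$ plus an additive term of order $m/(2k_1)$.

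Because $\alpha<1$ and $k_1\ge(n-2)/2$, the quantity $\delta(T)=T(m+k_2)/k_1=O(n^{\alpha-1})$ tends to $0$ as $n\to\infty$, so for all sufficiently large $n$ all three displayed bounds fall below $\tfrac{1}{12}$. The three block opinions then stay inside pairwise disjoint intervals of length $\tfrac{1}{6}$ centered at $-\tfrac{2}{3},0,\tfrac{2}{3}$, which yields $x_c(T)-x_a(T)>1$ (so the outer blocks still do not interact), while $x_b(T)-x_a(T)$ and $x_c(T)-x_b(T)$ both lie in $(0,1]$ and are strictly positive. Hence blocks (a) and (b) are neighbors at distinct opinions, so the system has not converged at time $T$, which gives convergence time at least $T+1>c_2$ and therefore $f(n,c_1 n^\alpha)\ge c_2$.

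The main technical obstacle, as in Lemma~\ref{lemma_lower_bound_1_single_component}, is the coupled recurrence for $x_b$: the coefficient $2k_1/(m+2k_1+k_2)$ that multiplies the outer-block deviation $\delta(t)$ is close to $1$, so a naive iteration would merely pass the error through rather than damp it. The argument survives because both the multiplicative ratio $(m+k_2)/k_1$ and the additive forcing $m/(m+2k_1+k_2)$ are $O(n^{\alpha-1})$ while $T$ is a fixed constant, so even a linear-in-$T$ accumulation of errors remains safely below the $\tfrac{1}{12}$ threshold for all sufficiently large $n$.
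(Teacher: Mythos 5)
Your argument is correct, but it is not the route the paper takes. The paper's proof is a separation-and-counting argument: it plants $\lfloor n/k\rfloor=\Theta(n)$ mutually far-apart equidistant configurations of constant size $k=2c_2+2$, each of which needs at least $c_2$ autonomous steps to converge, and observes that within $c_2$ time steps the $c_1 n^{\alpha}$ strategic agents can touch at most $c_2 c_1 n^{\alpha}=o(n)$ of these components, so some component evolves completely uninfluenced -- no quantitative tracking of opinions is needed at all. You instead build a single connected three-block configuration in the spirit of Lemma~\ref{lemma_lower_bound_1_single_component}, with outer blocks of weight $\Theta(n)$, and show by induction that the per-step displacement any placement of $m=\lfloor c_1 n^{\alpha}\rfloor$ strategic agents can cause is $O\!\left(n^{\alpha-1}\right)$, so after a constant number of steps the blocks $a$ and $b$ are still distinct neighbors and the system has not converged; the key observations (blocks stay collapsed by Proposition~\ref{prop_coincidence}, the outer contributions to the middle block cancel, and all error terms accumulate only linearly in the constant horizon $T$) all check out, and the thresholds ($\delta(T)<\tfrac{1}{12}$ keeping $x_c-x_a>1$ and $0<x_b-x_a\le 1$) are consistent. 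The trade-off: the paper's proof is shorter and entirely combinatorial, but relies on a highly disconnected initial configuration; yours is more technical but yields the stronger statement that the lower bound is already witnessed by a \emph{connected} starting configuration -- exactly the feature the paper singles out as the advantage of Lemma~\ref{lemma_lower_bound_1_single_component} over Lemma~\ref{lemma_lower_bound_alpha}. If you write it up, state the induction hypotheses and the constant $C(T)$ explicitly (a direct computation shows $C(T)=2$ suffices for large $n$), since the informal phrase ``mirrors the bookkeeping'' hides the one place where care is needed, namely that the coefficient of the outer-block error in the recursion for $x_b$ is close to $1$ and is controlled only because the horizon is constant.
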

\begin{proof}
  We consider the HK system with $\left\lfloor\frac{n}{k}\right\rfloor$ equidistant configurations consisting of $k$ 
  non-strategic agents, where $k=2c_2+2$, each. The remaining $n-\left\lfloor\frac{n}{k}\right\rfloor\cdot k\ge 0$ 
  non-strategic agents are placed far away from the equidistant configuration. If one of the {\connectivity} components of 
  $\mathcal{G}_0'$, corresponding to an equidistant configurations, is never influenced by any 
  strategic agent, then it takes at least $c_2$ time steps till convergence. Up to time $c_2-1$ at most 
  $c_2\cdot c_1\cdot n^\alpha$ {\connectivity} components of $\mathcal{G}_0'$ could be influenced by at least one strategic 
  agent. Thus, for $n$ sufficiently large, $c_2$ time steps are not enough so that all initial {\connectivity} components 
  can be affected at least once. 
\end{proof}

So far we have seen, that using a single strategic agent the optimal convergence time is upper bounded by $O(n^2)$ and using 
$9n$ strategic agents the optimal convergence time is upper bounded by the constant $2$. Next we deal with the case in between these 
two extremes.

\begin{theorem}
  \label{thm_in_between}
  For each $\alpha\in[0,1]$ we have $f(n,n^\alpha+12)\in O\!\left(n^{2-2\alpha}\right)$.
\end{theorem}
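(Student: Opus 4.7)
The plan is to combine the single-strategic-agent bound $f(n,1)\in O(n^2)$ (the corollary of Lemma~\ref{lemma_improvement}) with a parallel \emph{splitting} step, thereby interpolating between the two extreme regimes $f(n,1)\in O(n^2)$ and $f(n,9n)\in\Theta(1)$ given by Theorem~\ref{thm_brute_force}. Set $k=\lceil n^{1-\alpha}\rceil$. The idea is first to use roughly $n^\alpha$ strategic agents to partition the non-strategic agents into at most $\lceil n^\alpha\rceil+1$ pairwise disjoint groups of size at most $k$, and then to run the one-strategic-agent bound in parallel on each group.

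For the splitting phase I would sort the non-strategic agents by their starting opinions and select every $k$-th agent as a \emph{cut point}, giving at most $\lceil n^\alpha\rceil$ cut points. At each cut point I place a small (constant) number of strategic agents, mimicking the construction from the proof of Lemma~\ref{lemma_equidistant}, so that at the next time step the chosen agent has been pulled far enough away from its right-hand neighbor that their distance exceeds $1$. The extra $12$ strategic agents serve as a slack buffer for awkward edge cases --- short initial components, endpoints of the chain, or cut points whose local non-strategic neighborhood is too large for a single strategic agent to realize the required shift --- so that altogether the $n^\alpha+12$ strategic agents are enough to carry out all the cuts within $O(1)$ rounds.

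Once the splitting is completed, Proposition~\ref{prop_lr_shrinking} lets us place the strategic agents so that the new intervals are preserved, and each connected component of the restricted influence graph $\mathcal{G}_t'$ then contains at most $k$ non-strategic agents and evolves independently of the others. I would reassign the $n^\alpha+12$ strategic agents so that every one of the at most $\lceil n^\alpha\rceil+1$ components receives at least one of them. Applying the single-strategic-agent bound to each component in parallel gives per-component convergence in $O\!\left(k^2\right)=O\!\left(n^{2-2\alpha}\right)$ rounds, and since the components do not interact the whole system converges within $O\!\left(n^{2-2\alpha}\right)$ rounds, as claimed.

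The main obstacle is the splitting step. For equidistant or mildly perturbed initial configurations, Lemma~\ref{lemma_equidistant} shows that a single strategic agent at a well-chosen cut point already creates a gap strictly greater than $1$ after one round. For arbitrary initial configurations, however, one has to argue that a bounded number of strategic agents per cut always forces such a gap, even when the local non-strategic neighborhood of the cut-point agent is dense. The combination of a careful choice of cut points (favoring positions where the local density is low) with the one-step movement bound from Proposition~\ref{prop_bounded_movement} is the natural tool, but the explicit constant bookkeeping that yields the specific slack value $12$ is the delicate part of the argument.
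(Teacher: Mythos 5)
Your high-level plan---use roughly $n^\alpha$ strategic agents to split the population into $O\!\left(n^\alpha\right)$ separated groups of at most $k=\lceil n^{1-\alpha}\rceil$ agents each, then run the one-agent bound $f(k,1)\in O(k^2)=O\!\left(n^{2-2\alpha}\right)$ in parallel---is close in spirit to what the paper does, and the parallel phase itself is sound: Proposition~\ref{prop_lr_shrinking} keeps separated components separated, and a strategic agent placed inside one component's interval cannot influence another component once the gap exceeds $1$. The gap is exactly where you flag it, and it is not a matter of delicate constant bookkeeping: the splitting step as described cannot be carried out. The number of strategic agents required to force a gap at a cut point scales with the number of non-strategic agents in the local neighbourhood of that cut (this is visible in the proof of Theorem~\ref{thm_brute_force}, where $3(a+b+c)$ agents are needed to move opinions by $\tfrac12$), not with a constant. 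Concretely, for $\alpha>0$ take $\lceil n^\alpha\rceil$ non-strategic agents at each of the positions $0,\tfrac{9}{10},\tfrac{18}{10},\dots$; every cluster is averaged with roughly $3n^\alpha$ non-strategic opinions, so $c$ strategic agents can displace it by only about $c/(3n^\alpha)\to 0$ per round, the interior gaps stay near $\tfrac{9}{10}$, and no cut can be produced in $O(1)$ rounds with $O(1)$ agents per cut. Choosing cut points ``where the local density is low'' does not help, because in this example every candidate cut point is dense. A secondary inaccuracy: even where cuts are feasible, a per-round budget of $n^\alpha+12$ agents limits how many cuts can be made simultaneously, so the splitting phase needs $\Theta\!\left(n^{1-\alpha}\right)$ rounds, not $O(1)$.

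The paper resolves this by \emph{not} insisting on groups of equal size. Its Step~(1) only attempts a cut when the number $k$ of non-strategic agents in a local window of length $4$ satisfies $k\le(n^\alpha+12)/12$, and then spends $6k$ strategic agents (proportional to the local population, fitting the budget) to force the gap; this phase runs for $O\!\left(n^{1-\alpha}\right)$ rounds. Regions too dense to cut are left intact, and the key observation is that the union of these high-density components has total width $O\!\left(n^{1-\alpha}\right)$; placing all $n^\alpha+12$ strategic agents at $x_{l(\mathcal{C},t)}(t)+1$ of such a component contracts its width by at least $\tfrac{1}{2n^{1-\alpha}}$ per round, which yields the $O\!\left(n^{2-2\alpha}\right)$ bound for the dense part directly, without ever invoking $f(k,1)\in O(k^2)$. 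To repair your argument you would need to import this dichotomy (cut only where the local population fits the budget; contract dense components by width rather than splitting them), at which point you have essentially reconstructed the paper's proof.
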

\begin{proof}
  As usual, we assume that the non-strategic agents are ordered with respect to their starting opinions, i.e., 
  $x_1(0)\le\dots\le x_n(0)$. In a first step we use our strategic agents to influence each non-strategic agent at most once 
  in order to guarantee that after this step the {\connectivity} components have either a \textit{low width} or a \textit{high density}, 
  i.e., many members compared to the corresponding width. We will formulate our reasoning in the style of an algorithm, which we 
  then analyze later on.

  \medskip

  \noindent
  \textbf{Step (1):}\\  
  (1.1) $t\leftarrow 0$, $h\leftarrow 1$, $u\leftarrow n^\alpha+12$, unmark all $i\in N$\\
  (1.2) let $\mathcal{C}$ be the {\connectivity} component in $\mathcal{G}_t'$ with $h\in\mathcal{C}$\\
  (1.3) if $x_h(t)+4\ge x_{r(\mathcal{C},t)}(t)$ then\\
  \hspace*{12mm} $h\leftarrow 1+\max\{i\,:\,i\in\mathcal{C}\}$\\
  \hspace*{12mm} if $h>n$ then STOP else go to (1.2) end if\\
  \hspace*{8mm} end if\\
  (1.4) $k\leftarrow \left|\left\{i\in\mathcal{C}\,:\, x_h(t)\le x_i(t)\le x_h(t)+4\right\}\right|$\\
  (1.5) if $k>\left(n^\alpha+12\right)/12$ then\\
  \hspace*{12mm} mark $h$\\
  \hspace*{12mm} if $x_{h+1}(t)+4\le x_{r(\mathcal{C},t)}(t)$ then\\
  \hspace*{16mm} $h\leftarrow h+1$, go to (1.4)\\
  \hspace*{12mm} else\\
  \hspace*{16mm} $h\leftarrow 1+\max\{i\,:\,i\in\mathcal{C}\}$\\
  \hspace*{16mm} if $h>n$ then STOP else go to (1.2) end if\\
  \hspace*{12mm} end if\\
  \hspace*{8mm} end if\\ 
  (1.6) choose $a,b\in\mathcal{C}$ with $x_h(t)+1\le x_a(t),x_b(t)\le x_h(t)+3$ and $x_a(t)<x_b(t)$ such that $x_a(t)=x_{b-1}(t)$\\
  (1.7) if $u<6k$ then\\
  \hspace*{12mm} $t\leftarrow t+1$\\
  \hspace*{12mm} $u\leftarrow n^\alpha+12$\\
  \hspace*{12mm} go to (1.2)\\
  \hspace*{8mm} end if\\
  (1.8) place $3k$ strategic agents at $x_a(t)-1$ and $x_b(t)+1$ each\\
  (1.9) $u\leftarrow u-6k$\\
  (1.10) if $x_{b}(t)+4\ge x_{r(\mathcal{C},t)}(t)$ then\\
  \hspace*{12mm} $h\leftarrow 1+\max\{i\,:\,i\in\mathcal{C}\}$\\
  \hspace*{12mm} if $h>n$ then STOP else go to (1.2) end if\\
  \hspace*{8mm} else\\
  \hspace*{12mm} $h\leftarrow \min\!\left\{i\in\mathcal{C}\,:\, x_i(t)>x_b(t)+4\right\}$\\
  \hspace*{12mm} go to (1.3)\\
  \hspace*{8mm} end if\\
  
\bigskip

\noindent
By $t$ we denote the current time, by $h$ we denote the index of a non-strategic agent that we do not have 
considered before, and by $u$ we denote the number of strategic agents that we can still use at time $t$. In (1.1) 
we initialize these three variables, i.e., we start at time $t=0$ with the first non-strategic agent $h=1$ and still can
use all $m=n^\alpha+12$ strategic agents.  
  
By $\mathcal{C}$ we denote the {\connectivity} component of agent $h$ at time $t$. Whenever we change $h$ to a non-strategic 
agent outside of $\mathcal{C}$ or $t$ is increased by one we go to (1.2) in the remaining steps of the algorithm.  

In (1.3) we check whether the so far unconsidered part of $\mathcal{C}$ is \textit{rather short}. If this is the case, we increase 
$h$ to a non-strategic agent of the \textit{next} {\connectivity} component. Here $h>n$ is used as a stopping criterion.

Reaching (1.4), we can assume $x_h(t)+4\le x_{r(\mathcal{C},t)}(t)$ and denote the number of non-strategic agents with 
an opinion at time $t$ from the interval $\left[x_h(t),x_h(t)+4\right]$ by $k$.

In (1.5) we check whether there are \textit{many}, compared to the number of strategic agents, non-strategic agents with an 
opinion in the mentioned subinterval of the opinion space of length $4$. If this is the case, we increase the index $h$.

Reaching (1.6), we can assume $k\le \overset{\ge 1}{\overbrace{\left(n^\alpha+12\right)/12}}$ and choose two non-strategic agents $a,b\in\mathcal{C}$ satisfying certain 
technical constraints. The existence of $a,b\in\mathcal{C}$ can be concluded from the fact that the non-strategic agents of 
$\mathcal{C}$ are connected, so that the interval $\left[x_h(t)+1,x_h(t)+3\right]$ of length $2$ contains at least 
two different opinions of non-strategic agents at time $t$.
   
In (1.7) it is checked if we can still use $6k$ strategic agents at time $t$. If not, the time is increased to the next time step.

Reaching (1.8), we can assume that we can still use $6k$ strategic agents at time $t$. (1.8) describes the precise placement 
of the strategic agents and (1.9) performs the bookkeeping of the number of used strategic agents.

In (1.10) we ensure that any subsequent placement of strategic agents in (1.8) will not interfere with non-strategic agents
treated so far.

\bigskip

\noindent    
After a finite number of time steps the algorithm of Step~(1) stops with $h>n$. There exists exactly one place in the algorithm where 
$t$ is increased by one, i.e., substep (1.7). Since here the condition $u<6k$ is satisfied, at least 
$(t-1)\cdot \frac{n^\alpha}{2}$ strategic agents have been placed in total. In (1.4) and (1.8) we ensure that each $6$ placed 
strategic agents correspond to a non-strategic agent without double counting. Thus, the algorithm of Step~(1) finishes 
after $O\!\left(n^{1-\alpha}\right)$ time steps. 
  
In (1.8) strategic agents are used to split {\connectivity} components. To be precise, we have    
$$
  x_a(t+1)\le \frac{3k\cdot\left(x_a(t)-1\right)+1\cdot x_a(t)+k_1\cdot \left(x_a(t)+1\right)}{3k+1+k_1}\le x_a(t)-\frac{1}{2},
$$  
where $k_1\le k-1$. Similarly, we deduce $x_b(t+1)\ge x_b(t)+\frac{1}{2}$, i.e., $x_b(t+1)-x_a(t+1)>1$. We remark that any 
placement of strategic agents in the subsequent operations does not clash with our previous placement of strategic agents, even 
if they are performed at the same time $t$. To be more precise, if two strategic agents are placed at the same time 
within the range $\left[x_{l(\mathcal{C},t)}(t),x_{r(\mathcal{C},t)}(t)\right]$ of the same {\connectivity} component, then their 
distance is either zero or at least $2$. If two strategic agents are placed at the same time in two different {\connectivity} components, 
then there do not exist non-strategic agents which are influenced by both. 

There are exactly two reasons why the algorithm of Step~(1) does not further split {\connectivity} components. Either they have a 
\textit{low width} or a \textit{high density}. If the condition in (1.5) is satisfied at time $t$, then at time $t+2$ (check (1.7) may cause 
a delay of one time step) agent~$h$ 
is contained in a {\connectivity} component with density, i.e., number of non-strategic agent in the {\connectivity} component 
divided by its length, at least $\frac{n^\alpha}{144}$. Note that the first and the last subinterval of length $4$ might 
not have a high density, while the remaining part, i.e., those agents where the condition in (1.5) applies, must indeed have a 
high density, since otherwise the {\connectivity} component would have been partitioned into several {\connectivity} components 
by the algorithm. Furthermore, those agents where the condition in (1.5) applies, except those of the first and the last 
subinterval of length $4$ are marked. 
It is easy to check that the remaining {\connectivity} components have a width of at most $8$. Thus at the time Step~(1) has 
been completed, each non-strategic agent is either contained in a {\connectivity} component of a marked agent or contained 
in a {\connectivity} component of width of at most $8$.
  
  \bigskip

  \noindent
  \textbf{Step (2):} Let $t$ be one time step after the completion of Step~(1). 
  Let $\mathcal{V}\subseteq N$ be the set of non-strategic agents which are either contained in a {\connectivity} 
  component with at least one marked agent or in a connected component $\mathcal{C}$ with $|\mathcal{C}|/w(\mathcal{C},t)\ge 
  n^\alpha/144$ at time $t$. Given a marked agent $v\in\mathcal{V}$ we know that at the time of labeling, $v$ was contained 
  in a {\connectivity} component with a density of at least $\frac{n^\alpha}{144}$. During the execution of Step~(1) such a 
  {\connectivity} component may have been split into several subcomponents. Some of these may have a \textit{high density} 
  others may not. In any case the summed widths of the subcomponents is not larger than the width of the original component. 
  Thus, if $\mathcal{V}$ is partitioned into {\connectivity} components $\mathcal{C}_1,\dots,\mathcal{C}_r$, then we have
  $
    \sum_{i=1}^r w(\mathcal{C}_i,t)\in O\!\left(n^{1-\alpha}\right)
  $.   
  
  At a given time $t'\ge t$ we choose a {\connectivity} component $\mathcal{C}\subseteq \mathcal{V}$ with width 
  larger than $1$ and place all $n^\alpha+12$ strategic agents at $x_{l(\mathcal{C},t')}(t')+1$. Since the number 
  of non-strategic agents at position 
  $x_{l(\mathcal{C},t')}(t')$ is at most $n$, this decreases the width of the corresponding {\connectivity} 
  component by at least $\frac{1}{2n^{1-\alpha}}$. Thus, 
  it takes at most $O\!\left(n^{2-2\alpha}\right)$ time steps until 
  all {\connectivity} components of agents in $\mathcal{V}$ have a width of at most $1$.

  \bigskip
  
  \noindent
  \textbf{Step (3):}
  Let $t$ be one time step after the completion of Step~(2). At time $t$ each connected component $\mathcal{C}$ has 
  a width of at most $8$ and a cardinality of at most $n^\alpha/2$, since $8/144<1/2$. Next we loop over all non-strategic 
  agents once.  
  Let $\mathcal{C}$ be the {\connectivity} component of our current agent $h$ at the current time 
  $t'$. If $w(\mathcal{C},t')=0$ we do nothing for $h$, i.e., we consider the next non-strategic agent. 
  If $0<w(\mathcal{C},t')\le 1$ 
  we again do nothing and have $w(\mathcal{C},t'+1)=0$ at time $t'+1$. In the remaining cases we have  
  $1<w(\mathcal{C},t')\le 8$ and $\left|\mathcal{C}\right|\le 
  \frac{n^\alpha}{2}$ and we place $\left|\mathcal{C}\right|$ strategic agents at $x_{l(\mathcal{C},t')}(t')+1$ 
  until the width 
  of the {\connectivity} component is at most $1$, which happens after at most $14$ time steps, since the length of the 
  interval is decreased by at least $\frac{1}{2}$ at every time step. We can clearly perform several 
  such operations at the same time as long as we have a sufficient number of strategic agents available. Since running out off available 
  strategic agents means that we have used at least half of them, Step~(3) can be done in $O\!\left(n^{1-\alpha}\right)$ time 
  steps. After the completion of Step~(3) all non-strategic agents are frozen.
\end{proof}

\begin{lemma}
  \label{lemma_lower_bound_alpha}
  For each $\alpha\in[0,1]$ we have $f(n,n^\alpha)\in \Omega\!\left(n^{\frac{2-2\alpha}{3}}\right)$.
\end{lemma}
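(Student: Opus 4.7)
The plan is to construct many disjoint copies of the three-cluster configuration of Lemma~\ref{lemma_lower_bound_1_single_component} and argue via a pigeonhole that some copy must wait a long time before the strategic agents have done enough work on it to force its convergence.

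Concretely, I would set $k:=\max\{15,\lceil c\,n^{(1-\alpha)/3}\rceil\}$ for a suitable constant $c>0$ and $M:=\lfloor n/(2k^2+k)\rfloor=\Theta(n^{(1+2\alpha)/3})$. Place $M$ translated copies of the configuration of Lemma~\ref{lemma_lower_bound_1_single_component} at centers $c_j:=10 j$, and any surplus non-strategic agents at pairwise-isolated far-away positions. By Proposition~\ref{prop_bounded_movement} together with Lemma~\ref{lemma_lower_bound_1_single_component}'s bounds, the non-strategic opinions of copy $j$ remain in a window of width less than $2$ around $10 j$, so the copies lie in pairwise disjoint influence ranges and any single strategic agent can be within distance $1$ of non-strategic agents of at most one copy at any given time. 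Let $m_j(t)$ denote the number of strategic agents that influence copy $j$ at time $t$ and set $F_j(T):=\sum_{t<T} m_j(t)$; then $\sum_j F_j(T)\le mT$.

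The central technical step is to generalize the inductive proof of Lemma~\ref{lemma_lower_bound_1_single_component} to the case where the number of strategic agents acting on copy $j$ varies in time. I would rerun the induction with the strategic counter $t$ on the right-hand sides of the analogues of inequalities~(1)-(3) replaced by $F_j(t)$, so that at each step the single strategic agent's $\pm 1$ contribution to every averaging numerator is replaced by an $m_j(t)$-term contribution of at most $\pm m_j(t)$, and the averaging denominators grow by $m_j(t)$ correspondingly. The upshot is that the per-step shifts of $x_1^{(j)},x_2^{(j)},x_3^{(j)}$ are bounded by $m_j(t)/k$ (loose form), so the cumulative bounds $|x_1^{(j)}(t)+2/3|\le F_j(t)/k$ and $|x_2^{(j)}(t)|\le F_j(t)/k+F_j(t)^2/k^2$ hold, and, exactly as in the proof of Lemma~\ref{lemma_lower_bound_1_single_component}, inequalities~(4)-(6) remain valid and force copy $j$ to be unconverged at time $t$, as long as $F_j(t)\le k/8$.

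Given this generalization, suppose the whole system has converged at time $T$. Then $F_j(T)>k/8$ for every $j$, so $mT\ge\sum_j F_j(T)>Mk/8$ and hence $T>Mk/(8m)$. Using $Mk^2=\Theta(n)$ and substituting $k=\Theta(n^{(1-\alpha)/3})$ and $m=n^\alpha$ gives
\[
T\;>\;\frac{Mk}{8m}\;=\;\Theta\!\left(\frac{n}{km}\right)\;=\;\Omega\!\left(n^{\,1-(1-\alpha)/3-\alpha}\right)\;=\;\Omega\!\left(n^{(2-2\alpha)/3}\right),
\]
which is the desired bound. The main obstacle is the generalized induction in the third step: the original Lemma's bound $|x_1(t)+2/3|\le t/k$ implicitly bundles together the natural rightward drift of $x_1$ (coming from the type-(b) agents via inequality~(4)) and the strategic-driven drift, both of which contribute $1/k$ per step when exactly one strategic agent is present. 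With $m_j(t)$ varying, one has to verify carefully that the natural drift can still be absorbed into the $F_j(t)/k$ bound rather than appearing as a separate wall-clock term that would weaken the pigeonhole -- essentially by noting that as long as the copy stays in the Lemma's regime, the $k$ middle agents themselves drift at rate $O(m_j(t)/k)$, so the $F_j(t)^2/k^2$ quadratic correction in the analogue of inequality~(3) still suffices. Once this bookkeeping is in place, the pigeonhole and parameter arithmetic are routine.
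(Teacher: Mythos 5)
Your overall architecture (many disjoint hard blocks, plus a pigeonhole on the total strategic influence) matches the paper's, but your choice of building block breaks the argument. The paper tiles the line with $\lfloor n/k\rfloor$ \emph{dumbbell} configurations of $\Theta(k)$ agents each, whose untouched convergence time is $\Omega(k^2)$; you instead use copies of the three-cluster configuration of Lemma~\ref{lemma_lower_bound_1_single_component}, and your central claim -- that such a copy stays unconverged as long as $F_j(t)\le k/8$ -- is false. The $t/k$ terms in inequalities (1) and (2) of that lemma are genuinely wall-clock terms, not strategic-budget terms: the dominant contribution to the drift of $x_1$ is the pull of the $k$ type-(b) agents, of size $\tfrac{k\,(x_2(t)-x_1(t))}{k^2+k}\approx\tfrac{2}{3(k+1)}$ per step, which is present even when no strategic agent is anywhere near the copy (the strategic agent itself only contributes $O(1/k^2)$ per step to $x_1$). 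Indeed, with the strategic agents parked far away one has exactly $x_1(t)=-\tfrac{2}{3}\bigl(\tfrac{k}{k+1}\bigr)^t$ and $x_3(t)=-x_1(t)$, so inequality (6) fails after roughly $0.29\,k$ steps, at which point all agents share one neighborhood and collapse to consensus in one further step. So every copy converges by itself in $\Theta(k)$ wall-clock steps; the controller need not touch most copies at all, and your pigeonhole gives nothing beyond the cap $T=O(k)=O\bigl(n^{(1-\alpha)/3}\bigr)$, which is strictly below the target $n^{(2-2\alpha)/3}$ for every $\alpha<1$. (Your remark that the middle agents drift at rate $O(m_j(t)/k)$ is true for $x_2$ by symmetry, but the obstruction is the inward drift of $x_1$ and $x_3$, not of $x_2$.)

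This is not repairable within your construction: correctly keeping the wall-clock cap and re-optimizing $k$ to balance $k/8$ against $Mk/(8m)$ with $M=\Theta(n/k^2)$ gives $k=\Theta\bigl(n^{(1-\alpha)/2}\bigr)$ and only $T=\Omega\bigl(n^{(1-\alpha)/2}\bigr)$ -- essentially the weaker bound the paper notes one gets from tiling with equidistant blocks. To reach the exponent $\tfrac{2-2\alpha}{3}$ you need blocks of size $\Theta(k)$ whose untouched convergence time is $\Omega(k^2)$, which is exactly what the dumbbell configuration of \cite{wedin2014quadratic} provides; the paper then only needs the crude count that within $n^{2\beta}$ steps at most $n^{2\beta+\alpha}$ of the blocks can ever be entered by a strategic agent, with no generalized induction required.
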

\begin{proof}
  We set $k=\left\lfloor n^\beta\right\rfloor$, where $\beta=\frac{1-\alpha}{3}$, and consider the HK system 
  of $m=\left\lfloor n/k\right\rfloor$ dumbbell configurations consisting of $k-2$, $k-1$, or $k$ non-strategic agents 
  each (in the construction of a dumbbell configuration, we assume $k\equiv 1\pmod 3$). Of course, we also have to assume 
  that $k$ is sufficiently large. The possibly non-empty set of remaining non-strategic agents is placed far away from the 
  other $m$ {\connectivity} components at time $0$. If not affected by at least one strategic agent, the convergence 
  time of an arbitrary dumbbell {\connectivity} component is in $\Omega\!\left(n^{2\beta}\right)=\Omega\!\left(n^{\frac{2-2\alpha}{3}}\right)$. 
  Within $n^{2\beta}$ time steps, at most $n^{2\beta+\alpha}$ of the $m$ dumbbell {\connectivity} components can be affected 
  by at least a single strategic agent. Thus, we have $f(n,n^\alpha)\in \Omega\!\left(n^{\frac{2-2\alpha}{3}}\right)$.  
\end{proof}

We remark that for $\alpha=0$ 
Lemma~\ref{lemma_lower_bound_alpha} improves the lower bound 
from Corollary~\ref{cor_lower_bound} to $\Omega\!\left(n^{2/3}\right)$. If we replace the use of many dumbbell configurations 
by the same number of equidistant configurations an $\Omega\!\left(n^{\frac{1-\alpha}{2}}\right)$ lower bound can be concluded.

\section{Conclusion}
\label{sec_conclusion}
We have demonstrated that the convergence times of $\Omega(n)$ of the equidistant configuration and of $\Omega(n^2)$ of the 
dumbbell configuration can both be reduced to $O\!\left(n^{3/4}\right)$ using a single strategic agent. It would be very interesting 
to see optimized strategies and/or an improved analysis further reducing the upper bound of the optimal convergence time
for these two specific examples for $m=1$. Given a concrete HK system and a number $m$ of strategic agents, the optimal 
convergence time might be determined using an exact algorithm based on integer linear programming, similar to those ILP formulations 
presented in \cite{opinion_control,kurz2014long}. However, we do not go into details here and propose the development of 
(practically) efficient exact algorithms for the optimal control problem of minimizing the convergence time in the Hegselmann--Krause 
model as a research challenge.

We have further analyzed an example where the optimal convergence time using a single strategic agent is lower bounded by 
$\Omega\!\left(n^{2/3}\right)$. So, either this bound should be improved or the optimal convergence time of the two previously mentioned examples 
is better than $O\!\left(n^{3/4}\right)$. Since the upper bound for $f(n,1)$ is still $O(n^2)$, it would be nice if the considerably 
large gap could be narrowed. If enough strategic agents are available, any given HK system could be controlled in such a way that 
it converges in $2$ time steps. To be more precise, we have shown that $m=9n$ agents are sufficient. In some sense, the number 
of strategic agents must be at least as large as a constant fraction of the number of non-strategic ones in order to 
guarantee convergence in a constant number of time steps. So, we ask for the following tightening: Given a constant 
$c_2\in\mathbb{R}_{>0}$, what is the minimum constant $c_1\in\mathbb{R}_{>0}$, of course depending on $c_2$, such that we have 
$f(n,c_1n)\ge c_2$ for all sufficiently large $n$?

For the cases in between the two extreme situations of a single and that of very many strategic agents, we have proven a smooth upper 
bound meeting the best known bounds for the extreme situations up to a constant. However, we do not think that this general construction 
is tight. Indeed, there is a considerable gap to the presented lower bound. So, we ask for improvements.

Instead of distinguishing the stable state as the desired state, one can also ask for the minimal time needed to reach a consensus. However, 
the resulting problem is rather similar to the one studied in this paper.

Even without the presence of strategic agents there is a considerable gap between the best known upper $O(n^3)$ and lower bound
$\Omega(n^2)$ for convergence in the $1$-dimensional Hegselmann--Krause model. As demonstrated by the example analyzed in 
\cite{kurz2014long}, the estimation in the proof of the upper bound in \cite{bhattacharyya2013convergence} can be 
tight up to a constant for $\Omega(n)$ time steps. So, it seems that additional ideas are needed. 

All of our considerations were restricted to the one-dimensional case. The same questions can clearly be asked for higher dimensions 
and then for different norms.

There are variants of the HK model where it is still not known whether the system converges or not. One notable example is the heterogeneous 
version of a HK system, i.e., where the neighborhood radii of the agents are not necessarily the same. Here, one strategic 
agent is sufficient to guarantee convergence in a finite number of time steps, which follows from the same argument used in the proof of Lemma~\ref{lemma_improvement}. 
To be precise, the numerator $1$ of the lower bound in (2) has to be replaced by the smallest neighbourhood radius. Another example is the Hegselmann-Krause dynamics 
on the one-dimensional boundary of a circle assuming asymmetric influence ranges. For convergence results on the circle in the 
absence of strategic agents we refer the reader to \cite{hegarty2014circle}.

\section*{Acknowledgements} I thank Susanne B\"orner for carefully reading and commenting an earlier draft of this paper.


\providecommand{\href}[2]{#2}

\end{document}